\documentclass[final,12pt,3p]{elsarticle}

\usepackage{graphicx}
\usepackage{amssymb}
\usepackage{amsthm}
\usepackage{amsmath}
\usepackage{longtable} 
\usepackage{tikz}
\usepackage{lineno}
\usepackage{verbatim} 
\usepackage{hyperref}
\usepackage{xcolor}
\usepackage{caption}
\usetikzlibrary{shapes}
\usepackage{subcaption}
\usepackage{mathrsfs} 
\usepackage{array}
\usepackage{multirow}
\usepackage{booktabs} 
\usepackage{adjustbox}
\usepackage[title]{appendix}

\hypersetup{
colorlinks=true,
linkcolor=red,
filecolor=green,
urlcolor=green,
anchorcolor=green,
citecolor=cyan,
pdftitle={Overleaf Example},
pdfpagemode=FullScreen,
}
\usepackage[T1]{fontenc}
\usepackage[utf8]{inputenc} 
\usepackage[english]{babel}
\usepackage{lmodern} 
\usepackage{csquotes} 

\usepackage{amsmath,amssymb,amsthm,mathtools}
\usepackage{mathrsfs,stmaryrd}
\usepackage{siunitx} 
\usepackage{graphicx} 
\usepackage{tikz}

\usepackage{circuitikz} %
\usetikzlibrary{angles,calc,quotes}
\usetikzlibrary {arrows.meta,bending,positioning} 

\usepackage{tikz-cd} 

\usepackage{float}
\usepackage{pgfplots} 
\pgfplotsset{compat=1.18}

\usepackage{enumitem} 
\usepackage{tabularx} 

\usepackage{pdfpages}
\usepackage[ruled]{algorithm2e}
\usepackage{listings}
\usepackage{bm}
\definecolor{keywordcolor}{RGB}{0,0,120}
\definecolor{commentcolor}{RGB}{0,100,0}
\definecolor{stringcolor}{RGB}{120,0,0}
\theoremstyle{plain}
\newtheorem{theorem}{Theorem}

\newtheorem{lemma}[theorem]{Lemma}

\newtheorem{problem}[theorem]{Problem}

\theoremstyle{definition}
\newtheorem{definition}{Definition}
\newtheorem{example}{Example}

\theoremstyle{remark}
\newtheorem{remark}{Remark}

\newcommand{\R}{\mathbb{R}}
\newcommand{\yd}[0]{y_{\rm data}}

\newcommand{\J}[0]{\mathcal{J}}
\newcommand{\mH}{\mathcal{H}}

\newcommand{\Uad}{\mathcal{U}_{\mathrm{ad}}}
\newcommand{\dd}{\mathrm{d}}

\DeclareMathOperator{\tr}{tr}
\DeclareMathOperator{\diag}{diag}

\usepackage[figcolor=white]{todonotes}

\newcommand{\Tmatthias}[1]{\todo[inline, color=blue!40]{Matthias: #1}}

\begin{document}

\begin{frontmatter}

\title{Structure-Preserving Data-Driven Identification of Port-Hamiltonian Differential-Algebraic Systems}

\author[BUW]{Tom Zwerschke}
\ead{tom.zwerschke@uni-wuppertal.de}

\author[BUW]{Matthias Ehrhardt\corref{Corr}}
\cortext[Corr]{Corresponding author}
\ead{ehrhardt@uni-wuppertal.de}

\author[BUW]{Michael Günther}
\ead{guenther@uni-wuppertal.de}

\author[BUWO]{Claudia Totzeck}
\ead{totzeck@uni-wuppertal.de}

\address[BUW]{University of Wuppertal, 
Applied and Computational Mathematics,\\
Gaußstrasse 20, 42119 Wuppertal, Germany}

\address[BUWO]{University of Wuppertal, 
Optimization Group,\\
Gaußstrasse 20, 42119 Wuppertal, Germany}


\begin{abstract}
We present a data-driven approach to identifying linear index-1 differential-algebraic pH systems (pH-DAEs) based on input-output measurements. In comparison to the identification of port-Hamiltonian (pH) systems, the algebraic constraint and the index condition pose additional challenges.
First, we establish a structure-preserving formulation of the considered pH-DAE class and derive an implicit midpoint discretization that preserves the algebraic constraints and discrete dissipation inequality. 
We formulate the identification problem as a regularized least-squares minimization problem subject to the pH-DAE dynamics. 
Exploiting the index-1 structure, we reduce the constrained problem to an unconstrained optimization problem over the system parameters while preserving the port-Hamiltonian structure. 
Next, we derive an adjoint-based formulation to efficiently evaluate the gradient of the resulting reduced cost functional. 
This enables us to use gradient-based optimization methods for parameter estimation. Under suitable assumptions on the admissible parameter set, the existence of a minimizer is established. 
Numerical experiments demonstrate that the proposed approach can identify surrogate pH-DAE systems that accurately reproduce the input-output behavior of reference systems. 
Further investigations 
show the approach's potential for identifying reduced-order surrogate models. Cross-validation with independent input signals confirms the predictive capability of the identified models.

\end{abstract}

\begin{keyword}
Differential-algebraic equations \sep port-Hamiltonian systems \sep system identification \sep data-driven modelling \sep  structure-preserving methods \sep adjoint methods 

\textit{2020 Mathematics Subject Classification:} 93B30, 34A09, 65L80, 90C30
\end{keyword}


\journal{Computers & Mathematics with Applications} 
\end{frontmatter}

\section{Introduction}\label{sec:introduction}

Mathematical models of physical systems are often characterized by conservation laws, dissipation mechanisms, and interconnections between different physical subsystems. 
Preserving these structural properties in the mathematical representation is important not only for the physical interpretability of the model, but also for the stability and robustness of numerical simulations. 
\textit{Port-Hamiltonian (pH) systems} provide a systematic framework for structure-preserving modelling based on the balance of energy and power. 
In particular, the pH formulation separates the power-conserving part of the dynamics from dissipative effects and external interactions.
This structure is preserved under power-conserving interconnections and therefore provides a natural framework for the modelling and coupling of complex multiphysics systems; see, e.g., \cite{schaftPortHamiltonianSystemsNetwork2004, jacobLinearPortHamiltonianSystems2012}.

In many applications, however, 
a detailed physical model of a subsystem is not available. 
This may be caused by incomplete knowledge of the underlying 
physical parameters, inaccessible internal variables, or the use of proprietary simulation software for which only input-output information is available. 
In such situations, data-driven system identification provides a means of constructing surrogate models from measurements or simulation data. 
For  port-Hamiltonian systems, it is  
desirable that the identified model retains the pH structure, since otherwise important properties of the original physical system may be lost during the identification process.

Several approaches for data-driven identification of pH systems have been developed in recent years. Frequency-domain approaches based on the Loewner framework can directly construct pH realizations from suitable frequency response data \cite{BennerGoyalVanDooren2020}.
A related non-intrusive approach for time-domain data first extracts frequency-response information from input-output measurements and subsequently constructs a pH realization using a Loewner-based procedure \cite{CherifiGoyalBenner2022}. 
These approaches are attractive when frequency-response data are available or can be reliably inferred from the measurements. 
An alternative strategy is to formulate the identification directly as an optimization problem. 
In \cite{guntherDatadrivenAdjointbasedCalibration2024}, an adjoint-based time-domain calibration method was developed for linear pH ordinary differential equations (ODEs). 
The system matrices are determined directly from input-output data by minimizing a data-misfit functional subject to the pH dynamics. 
Importantly, the pH constraints are incorporated directly into the optimization problem, so that the identified systems retain the skew-symmetry and positive-(semi-)definiteness properties required by the pH formulation.
Sensitivity-based identification methods have also been considered for linear pH systems \cite{guntherStructurepreservingIdentificationPortHamiltonian2023}.

Differential-algebraic equations (DAEs) arise naturally when dynamics have to obey strict structural constraints, for example in mechanical multibody systems such as robotics. 
Linear pH descriptor systems were systematically developed in \cite{beattieLinearPortHamiltonianDescriptor2018}, where the pH structure was extended to constrained dynamical systems of arbitrary differentiation index. 
The authors showed how regularization procedures can be formulated while preserving the pH structure. 
Numerical methods for DAEs with pH applications have 
been developed, including structure-preserving splitting techniques; see, e.g., \cite{bartelOperatorSplittingBased2023, bartelSplittingTechniquesDAEs2025}.

The data-driven identification of pH-DAE systems, however, is considerably less developed than the 
problem for pH-ODE systems. 
One recent approach was proposed by Zaspel and Günther \cite{zaspelDatadrivenIdentificationPortHamiltonian2024}, where Gaussian processes are employed to identify nonlinear effort functions of pH-DAE systems from input and state data.
The approach was demonstrated for index-1 and index-three pH-DAE systems arising in network design and constrained multibody dynamics. 
While this provides a data-driven framework for pH-DAEs, it requires state information and follows a fundamentally different, nonparametric modelling strategy.

Here, we consider the complementary problem of identifying \textit{linear index-1 pH-DAE systems directly from input-output measurements}. 
Our approach extends the adjoint-based time-domain calibration framework for pH-ODE systems in \cite{guntherDatadrivenAdjointbasedCalibration2024} to the differential-algebraic setting. 
The main difficulty is that the system dynamics are subject to algebraic constraints and that the differential and algebraic variables must remain compatible with the index-1 condition throughout the identification procedure. 
We therefore develop a structure-preserving numerical treatment of the underlying pH-DAE and exploit its index-1 structure to formulate the identification problem as an optimization problem in the system parameters.

More precisely, we first formulate the considered class of linear pH-DAE systems and establish the structural properties relevant for the identification problem. 
For index-1 systems, we derive a numerical discretization that respects the algebraic constraints and the dissipative structure. 
The resulting data-driven identification problem is posed as a least-squares minimization problem subject to the pH-DAE dynamics. 
By exploiting the index-1 structure, the constrained problem can be reduced to an unconstrained optimization problem over an appropriately parameterized set of pH system matrices. 
We derive the gradient of the reduced cost functional using an adjoint formulation. 
This avoids the explicit computation of sensitivities with respect to every individual system parameter and provides an efficient basis for gradient-based optimization. 
Under suitable assumptions, we additionally establish the existence of a minimizer of the identification problem.

The proposed approach is assessed numerically using several reference systems. 
The experiments investigate the influence of the number of differential variables used in the identified model and examine whether lower-dimensional pH-DAE surrogate models can reproduce the observed input-output behaviour. 
In addition, the identified models are evaluated using independent test signals. 
The numerical results demonstrate that the identified systems can reproduce the input-output behaviour of the reference systems while retaining the port-Hamiltonian structure. 
In particular, the cross-validation experiments indicate that the identified models are not merely fitted to the training signal but can provide useful surrogate models for unseen inputs.

The paper is organized as follows. 
Section~\ref{sec:pH} introduces the port-Hamiltonian framework and recalls the properties needed in the subsequent analysis. 
Section~\ref{sec:DAE} discusses differential-algebraic systems and introduces the considered class of linear port-Hamiltonian DAEs, with particular emphasis on the index-1 case.
The data-driven identification problem and the existence result are formulated in Section~\ref{sec:identification}.
Section~\ref{sec:optimization} derives the parameterization and the adjoint-based gradient used for the numerical
optimization. 
Numerical experiments are presented in Section~\ref{sec:numerical}, including the model-reduction and
cross-validation studies.
Finally, Section~\ref{sec:conclusion} summarizes the main results and discusses possible directions for future work.

\section{Port-Hamiltonian Systems}\label{sec:pH}
We consider the Hamiltonian equations for a mechanical system 
\cite{schaftPortHamiltonianSystemsNetwork2004, bartelPortHamiltonianSystemsModelling2023}
\begin{equation}\label{eq.HamiltonianEquations}
    \dot{q} = \frac{\partial H(p,q)}{\partial p},  \qquad
    \dot{p} = -\frac{\partial H(p,q)}{\partial q} + F,
\end{equation}
with the generalized coordinates $q=(q_1,\ldots,q_k)^\top$, the generalized momenta $p=(p_1,\ldots,p_k)^\top$ and the vector of generalized forces $F$.
The function $H(p,q)$ is called
\textit{separable Hamiltonian} and is given by the \textit{total energy} 
$H(p,q)=U(p)+V(q)$, i.e., the sum of kinetic energy $U$ and potential energy $V$.
An important  property of such systems is the \textit{energy balance}
\begin{equation}
    \frac{\dd}{\dd t} H = \frac{\partial^\top H}{\partial q}(q,p) \dot{q} + 
    \frac{\partial^\top H}{\partial p}(q,p) \dot{p} = \frac{\partial^\top H}{\partial p}(q,p) F = \dot{q}^\top F.
\end{equation}

The Hamiltonian equations \eqref{eq.HamiltonianEquations} without external force $F$ can be expressed in matrix form 
\begin{align}
    \dot{x} = J \nabla H(x)\quad\text{with}\quad
    x = (q,p)^\top\quad\text{and}\quad J =\begin{psmallmatrix} 0 & I\\ -I & 0 \end{psmallmatrix}.
\end{align}
Next, we replace the matrix $J$ by an arbitrary skew-symmetric matrix that is again called $J$, and add a positive semi-definite matrix $R$ to include a dissipative term (energy loss) 
\begin{equation}
    \dot{x} = (J-R) \nabla H(x).
\end{equation} 
Finally, we include an input and an output to couple the system to the environment.
\begin{definition}[\cite{lorenzOperatorSplittingCoupled2025b}]\label{def:PHS}
    Let $H\colon\R^n\to\R$ be a twice continuously differentiable function, $J\in \R^{n\times n}$ a skew-symmetric matrix, i.e., $J^\top=-J$, and $R\in \R^{n\times n}$ a symmetric positive semi-definite matrix, i.e., $R \succeq 0$. The system
    \begin{subequations}\label{eq:PHS}
        \begin{align}
            \dot{x}(t) &= (J-R) \nabla H(x(t)) + B u(t),  \quad x(0) = x_0, \label{eq:ph-ode}\\
            y &= B^\top \nabla H(x(t)),
        \end{align}
    \end{subequations}
    is called a \textit{port-Hamiltonian system} (pH system) where $u\colon[0,T]\to \R^m$ and $y\colon[0,T]\to\R^m$ denote an input and output signal with $T>0$ the time horizon. 
    The matrix $B\in \R^{n\times m}$ is called \textit{port matrix}. 
\end{definition}
In the linear case, the Hamiltonian $H$ is a quadratic function, i.e., $H(x)=\frac{1}{2}x^\top Q x$, where $Q\in \R^{n\times n}$. 
Since $H$ represents the energy of the system, we require that $H\ge 0$. 
Therefore, $Q$ is a symmetric positive semi-definite matrix. 

A fundamental property of port-Hamiltonian systems is the \textit{dissipation inequality}, i.e.\
for the Hamiltonian $H$ and the solutions $x(t)$ and $y(t)$ of the system \eqref{eq:PHS} we have
 \begin{equation}\label{eq:diss_ineq}
        \frac{\dd}{\dd t} H(x(t)) \le y(t)^\top u(t),
    \end{equation}
or in integral form 
\begin{equation}
    H(x(t+h)) -  H(x(t)) \le \int_t^{t+h} y(\tau)^\top u(\tau) \,d\tau.
\end{equation} 
Finally, the second important property is the \textit{interconnection}, i.e.\ the proper coupling of pH systems leads again to a monolithic pH system, see \cite{ehrhardtStructurepreservingCouplingDecoupling2026} for details.

The next step is to design a numerical scheme that preserves the dissipation inequality \eqref{eq:diss_ineq}.
For simplicity of notation, we consider a uniform grid for discretization: $0 < t_1 < \dots < t_{\rm end}$ with step size $h = t_{n+1} - t_n>0$ and numerical solutions $x_n\approx x(t_n)$. 
A class of methods that satisfied the
discrete version of the dissipation inequality \eqref{eq:diss_ineq}
\begin{equation}
    \frac{H(x_{n+1})-H(x_n)}{h}\le y_{n+1}^\top u_{n+1}
\end{equation}
is the discrete gradient method.
\begin{definition}[\cite{gonzalezTimeIntegrationDiscrete1996}]\label{def:dis_grad}
    The discrete gradient of a continuously differentiable function $H\colon \R^n \to \R$ is the mapping
    $\overline{\nabla} H\colon\R^n \times \R^n \to{} \R$ with the two properties
    \begin{align}
        \label{eq:dis_grad1}
        \overline{\nabla} H(y,x)^\top(y-x) &= H(y) - H(x) \quad \text{for all }x,y \in \R^n,\\ \label{eq:dis_grad2}
        \overline{\nabla} H(x,x) &= \nabla H(x) \quad \text{for all } x \in \R^n.
    \end{align}
\end{definition}

\begin{definition}
Let $\overline{\nabla} H$ be the discrete gradient of the function $H$ defined in Definition \ref{def:dis_grad}. Furthermore, let $J$ be a skew-symmetric matrix, $R$ a positive semi-definite matrix. 
Then
\begin{align}\label{def:dis_meth} 
    x_{n+1} = x_n + h (J-R) \overline{\nabla} H(x_{n+1},x_n) + h Bu_{n+1}
\end{align} defines the \textit{discrete gradient method}. 
The definition of $u_{n+1}$ depends on the choice of $\bar{\nabla}H$.
\end{definition}

For a quadratic Hamiltonian
$H(x)=\frac{1}{2}x^\top Qx$ with $Q=Q^\top$, 
the \textit{(midpoint) discrete gradient} \cite{gonzalezTimeIntegrationDiscrete1996} reduces to the midpoint evaluation of the gradient,
\begin{equation}\label{eq:mittelp}
    \overline{\nabla} H(x_{n+1},x_n)
    = \nabla H\Bigl(\frac{x_n+x_{n+1}}{2}\Bigr)
    = Q\Bigl(\frac{x_n+x_{n+1}}{2}\Bigr).
\end{equation}
Consequently, the resulting discrete-gradient scheme coincides with
the implicit midpoint rule; see, e.g., \cite[Section~3.1]{HairerLubich2014}.
\section{Port-Hamiltonian Differential-Algebraic Systems}\label{sec:DAE}
In this section we consider the special case of linear port-Hamiltonian DAE systems which represent a generalized form systems of port-Hamiltonian ODEs. 
First, we introduce port-Hamiltonian DAE systems and subsequently discuss the properties of such systems.
\begin{definition}[pH-DAE,\cite{bartelSplittingTechniquesDAEs2025}] \label{def:pH-DAE}
    A linear constant coefficient DAE system of the form
    \begin{subequations}\label{eq:pH-DAE_general}
    \begin{align}
        E\dot{x}&=(J-R)Qx+B u,\\
        y&=B^\top Qx,
    \end{align}
    \end{subequations}
    with $E$, $Q$, $J$, $R\in\mathbb{R}^{n\times n}$, $B\in\mathbb{R}^{n\times m}$, and $x = x(t) \in \mathbb{R}^n$, $u= u(t)$, $y = y(t)\in\mathbb{R}^m$ with $t \in [0,T]$, 
    is called a \textit{port-Hamiltonian differential-algebraic system} (pH-DAE), if
%
\begin{enumerate}

\item the differential-algebraic operator
\begin{equation*}
   Q^\top E\frac{\dd}{\dd t}-Q^\top JQ \colon \mathcal{X}\subset \mathcal{C}^1([0,T],\R^n)\to \mathcal{C}^0([0,T],\mathbb{R}^n)
\end{equation*}
is skew-adjoint, i.e.,
$Q^\top J^\top Q=-Q^\top JQ$ and $Q^\top E=E^\top Q$ hold,

\item the product $Q^\top E$ is positive semi-definite, i.e., $Q^\top E \succeq 0$, 

\item the passivity matrix $W=\begin{pmatrix}Q^\top RQ\end{pmatrix}$
is symmetric positive semi-definite, $W \succeq 0$.
\end{enumerate}
The underlying quadratic Hamiltonian $\mH\colon\mathbb{R}^n\to\mathbb{R}$ of the system is
$\mH (x)=\frac{1}{2}x^\top Q^\top Ex$.
\end{definition}

In this work we focus on the identification of pH-DAE systems in the form
\begin{align}\label{eq:eq14}
    \begin{pmatrix}
        I_r & 0\\
        0 & 0
    \end{pmatrix}
    \begin{pmatrix}
        \dot{x}_1 \\ \dot{x}_2
    \end{pmatrix}
    = \left(
    \begin{pmatrix}
        J_{11} & J_{12} \\
        J_{21}  & J_{22} 
    \end{pmatrix}
    -
    \begin{pmatrix}
        R_{11}   & R_{12} \\
        R_{21}   & R_{22} 
    \end{pmatrix}
    \right)
    \begin{pmatrix}
        x_1 \\ x_2
    \end{pmatrix}
    + \begin{pmatrix}
        B_1 \\ B_2
    \end{pmatrix} 
    u.
\end{align}
We note that, provided $Q$ is regular, any linear pH-DAE system can be transformed into a system \eqref{eq:eq14},
which is already given in semi-explicit form. Componentwise we obtain
\begin{align}
    \dot{x}_1 &= (J_{11}-R_{11}) x_1 + (J_{12}-R_{12}) x_2 + B_1 u, \label{eq:diffx1}\\
    0 &= (J_{21}-R_{21}) x_1 + (J_{22}-R_{22}) x_2 + B_2 u. \label{eq:algebraic_part}
\end{align}
The first $r$ components of vector $x$ correspond to the differential variable $x_1$, and the remaining $n-r$ components correspond to the algebraic variable $x_2$.
Under the index~1 assumption that the matrix $(J_{22}-R_{22})$ is regular, the algebraic equation can be solved for $x_2$:
\begin{equation}\label{eq:x_2}
        x_2 = -(J_{22}-R_{22})^{-1}\bigl((J_{21}-R_{21})x_1 + B_2 u\bigr).
\end{equation}
If we substitute the algebraic part \eqref{eq:x_2} into the differential part \eqref{eq:diffx1}, we obtain 
\begin{equation}\label{eq:x_1}
\begin{split}
        \dot{x}_1 
         &= \bigl( J_{11}-R_{11} - (J_{12}-R_{12})(J_{22}-R_{22})^{-1}(J_{21}-R_{21}) \bigr)x_1 \\
         & \qquad + \bigl(B_1 - (J_{12}-R_{12})(J_{22}-R_{22})^{-1} B_2 \bigr) u,
\end{split}
\end{equation}
which is a closed-form expression for $x_1$.
Hence, we obtain a simple ODE system for $x_1$ \eqref{eq:x_1} and an explicit equation for $x_2$ \eqref{eq:x_2}. 
The initial condition for $x_1(0)=x_{1,0}$ can be chosen arbitrarily, while the initial condition for the algebraic variable $x_2$ is given by
\begin{equation}\label{eq:x_2_initial}
    x_{2,0} \coloneq x_2(0) = -(J_{22}-R_{22})^{-1}\bigl((J_{21}-R_{21})x_1(0) + B_2 u(0)\bigr).
\end{equation} 
Thus, instead of writing $x(0)=x_0\in \R^n$ as a consistent initial value, we can write $x_1(0)=x_{1,0}\in \R^r$ and determine $x_2(0)=x_{2,0}\in \R^{n-r}$ using \eqref{eq:x_2_initial}. 
\begin{remark}
    In \cite{beattieLinearPortHamiltonianDescriptor2018}, it has been shown that the index of the pH-DAE \eqref{eq:pH-DAE_general} is at most two. 
    Furthermore, it was shown that, under certain assumptions an index~2 pH-DAE can be transformed into an equivalent index~1 pH-DAE using a structure preserving regularization. 
\end{remark}

In the sequel we will consider again the implicit midpoint and derive the method directly for pH-DAE systems by approximating the derivative $\dot{x}(t_{n+1/2})$ with the finite difference $\frac{x_{n+1} - x_n}{h}$ and $x(t_{n+1/2})$ with the midpoint $\frac{x_n + x_{n+1}}{2}$. This yields
\begin{align}\label{eq:DAE_midpoint_rule}
    E\,\frac{x_{n+1} - x_n}{h} = (J-R)\, \frac{x_n+x_{n+1}}{2} + B \,\frac{u_n + u_{n+1}}{2}.
\end{align}
In the sequel, we restrict ourselves to the transformed pH-DAE systems of form \eqref{eq:eq14}, i.e.\
\begin{subequations}\label{eq:pH-DAE_transformed}
        \begin{align}
        E\,\dot{x} &= (J-R)\,x + B\,u,\quad\text{with}\quad
        E= \diag(I_r,0),\\
        y &= B^\top x. 
        \end{align}
    \end{subequations}
\begin{definition}\label{def:midpoint_rule}
    The implicit midpoint rule for solving the system \eqref{eq:pH-DAE_transformed} is given by
    \begin{equation}\label{eq:midpoint_rule}
        \begin{split}
            \Bigl(E-\frac{h}{2}(J-R)\Bigr) x_{n+1} &=  \Bigl(E+\frac{h}{2}(J-R) \Bigr) x_n + \frac{h}{2} B (u_n + u_{n+1}), \\
            x_{n+1} &= \Bigl(E- \frac{h}{2}(J-R)\Bigr)^{-1} \biggl[\Bigl(E+ \frac{h}{2}(J-R)\Bigr) x_n +  \frac{h}{2}B (u_n + u_{n+1})\biggr].
        \end{split}
    \end{equation}
\end{definition}
It was shown in \cite{bartelSplittingTechniquesDAEs2025}
that the rule \eqref{eq:midpoint_rule} is well-defined for index-1 systems and satisfies
    the algebraic constraints and the discrete form of the dissipation inequality, i.e.\ the midpoint rule is structure-preserving for pH-DAE systems under certain conditions. 

\section{Identification Problem}\label{sec:identification}
In the previous section we introduced pH-DAE systems. 
Now, we will investigate how we can reconstruct such systems from measured data $(t_i,u_i,y_i)_{i=0,\dots,N}$. 
First, we will formulate the minimization problem.
We assume that we have an input signal $u\colon [0,T] \to \R^m$ and the corresponding output signal $\yd \colon [0,T]\to \R^n$ from an unknown system. If only discrete values $(t_i,u_i,y_i)_{i=0,\dots,N}$ are given, the functions $u(t)$ and $y(t)$ are obtained by interpolation.
\begin{figure}[H]
  \centering
  
    \begin{tikzpicture}
  \node[draw, minimum width=2cm, minimum height=1cm] (system) {pH-DAE};

  \draw[->] (-3,0) -- (system.west) node[midway, above] {$u(t)$};

  \draw[->] (system.east) -- ++(2,0) node[midway, above] {$y(t)$};

\end{tikzpicture}
\caption{A linear pH-DAE system to be determined.}
\end{figure}
The main assumption in the identification process is that the data are generated by a linear index-1 pH-DAE system. 
We consider the calibration problem with \textit{cost functional}
\begin{equation}\label{eq:costfunctional}
    \J(y,w) = \frac{1}{2} \int_0^T |y(t) - \yd(t) |^2 \,\dd t + \frac{\lambda}{2} | w-w_{\rm ref} |^2, \qquad \lambda \ge 0,
\end{equation}
subject to the state constraint or \textit{state equation} (cf.\ \eqref{eq:pH-DAE_transformed}):
\begin{subequations}\label{eq:state}
	\begin{align}
        E\dot{x} &= (J-R)x + Bu, \qquad   x_1(0)=x_{1,0} \in \R^r, \\
        y &= B^\top x,
    \end{align}
\end{subequations}
where $w = (J,R,B,x_{1,0})$ contains the matrices and the initial data to be identified by minimizing $\J$. 
If applicable, $w_{\rm ref}$ contains reference data. Otherwise, we set $\lambda = 0$.
%

Since we assume that we are investigating a pH-DAE, the following properties hold:
\begin{equation}\label{eq:matrix}
 J,R \in \R^{n\times n}, \qquad J^\top = -J, \qquad R \succeq 0, \qquad B \in \R^{n\times m}, \qquad 
 (J_{22}-R_{22}) \text{ is regular}.
\end{equation}
%
However, the proof of continuous dependence on the data shows that the index-1 condition on $J_{22}-R_{22}$ to be regular, alone is insufficient. 
Therefore, we additionally assume that the norm of the inverse matrix is bounded, i.e., there exists $C\gg 0$ such that is holds $\|(J_{22}-R_{22})^{-1}\| \le C$.
Note that this is equivalent to the smallest singular value being greater than or equal to some threshold $\epsilon>0$, i.e., $\sigma_{\min}(J_{22}-R_{22})\ge \epsilon >0$, cf.\ \cite{Golub2013}.
In other words, the matrix remains sufficiently far away from a singularity. 
%
Therefore, we define the \textit{admissible parameter set} as
\begin{equation}
\Uad:=\bigl\{w=(J,R,B,x_{1,0})\in W\colon J^\top=-J,\quad
R\succeq0,\quad
\sigma_{\min}(J_{22}-R_{22})\ge\epsilon>0 \bigr\},
\end{equation}
where 
$W = \R^{n\times n} \times \R^{n\times n} \times \R^{n\times m} \times \R^r$
denotes the parameter space. 
Since the smallest singular value depends continuously on the matrix entries, $\Uad$ is closed.
Moreover, the condition $\sigma_{\min}(J_{22}-R_{22})\ge\epsilon$ ensures the uniform bound
\begin{equation*}
   \|(J_{22}-R_{22})^{-1}\|_2 \le \epsilon^{-1}.
\end{equation*}

We note that the solution to the identification process is also in $\Uad$, since we restrict ourselves to index-1 pH-DAEs. Moreover, we define the output space $Y=H^1((0,T),\R^m)=\{y \in L^2((0,T),\R^m)\colon \dot{y}\in L^2((0,T),\R^m)\}$
equipped with the norm 
\begin{equation}
    \|y\|_{H^1((0,T),\R^m)} = \bigl(\|y\|_{L^2((0,T),\R^m)}^2 + \|\dot{y}\|_{L^2((0,T),\R^m)}^2\bigr)^{1/2}.
\end{equation}
This choice of the space can be justified by the fact that $y$ and $x$ have the same structure, since $y=B^\top x$, and $x$ must be differentiable, i.e., $x\in H^1((0,T),\R^n)$.
In addition, we are introducing the following notation
\begin{equation*}
  \J_1(y) := \frac{1}{2} \int_0^T |y(t) - \yd(t) |^2 \,\dd   t, \qquad 
  \J_2(w) := \frac{\lambda}{2} | w-w_{\rm ref} |^2.
\end{equation*}
If no information on reference data are available, we recommend to set $\lambda=0$. In this case, the cost function reduces to $\J_1(y)$.

Now we have all important ingredients in order to formulate our calibration task. 
\begin{problem}\label{problem}
    Find system matrices and the initial data, $w=(J,R,B,x_{1,0})$, which solve 
    \begin{equation} \tag{IdP}\label{eq:problem}
    \min\limits_{(y,w) \in Y \times \Uad } \J(y,w) \quad \text{subject to} \quad \eqref{eq:state}.
    \end{equation}
\end{problem} 
This minimization problem (IdP) is solved using an iterative method and will be discussed in Section~\ref{sec:optimization}. 
The idea is to start from an initial guess $w^0$ and solve the state equation to obtain state variable $x$ and output $y$. 
Based on this, we evaluate $\J$ and calculate the gradient which is then used for a gradient descent to minimize $\J$. 
The aim now is to prove that the \textit{identification problem} \eqref{eq:problem} is well-posed and solvable under certain conditions. To this end, we first make some observations and then prove that the solution to the state equation depends continuously on the data.
\begin{remark}
    The identification problem \eqref{eq:problem} still has two free parameters.
    While $m$ is determined by the dimension of $\yd$, the dimension of the internal state $n$ and the size of the algebraic part $n-r$ are unknown. We therefore assume that $n$ has already been specified. 
    For a deliberately small choice of $n$, this can be interpreted as a model order reduction for the internal state. 
    To determine the number of differential variables, it is necessary to solve the minimization problem for various $r$. 
    Then, one the hand, we can compare the final costs values, and on the other hand, examine the behavior in response to a second test input $u_{\rm test}$ (cross-validation).  
\end{remark}

\begin{lemma}\label{lemma1}
    For every $w\in \Uad$ and $u\in C^1([0,T],\R^m)$, the state equation \eqref{eq:state} admits a unique solution $x\in C^1([0,T],\R^n)$. Moreover, the solution depends continuously on the data. More precisely, for $w,\bar{w}\in\Uad$ and the corresponding solutions $x,\bar{x}$, there exists a constant $C>0$ such that
    \begin{equation*}
        \| x - \bar{x} \|_{H^1((0,T),\R^n)} \le C \|w-\bar{w}\|_{\Uad}.
    \end{equation*}
\end{lemma}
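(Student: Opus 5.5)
The plan is to reduce the DAE \eqref{eq:state} to the ODE \eqref{eq:x_1} for the differential variable, together with the explicit formula \eqref{eq:x_2} for the algebraic variable, and to handle each part with elementary linear-ODE estimates. For $w\in\Uad$ the block $A_{22}:=J_{22}-R_{22}$ is invertible with $\|A_{22}^{-1}\|_2\le\epsilon^{-1}$. We therefore define
\begin{equation*}
A(w):=A_{11}-A_{12}A_{22}^{-1}A_{21},\qquad \tilde B(w):=B_1-A_{12}A_{22}^{-1}B_2,\qquad A_{ij}:=J_{ij}-R_{ij}.
\end{equation*}
\emph{Existence and uniqueness.} Since $\dot x_1=A(w)x_1+\tilde B(w)u$ with $x_1(0)=x_{1,0}$ is a linear constant-coefficient ODE, Picard--Lindelöf (or the variation-of-constants formula) gives a unique $x_1\in C^1([0,T],\R^r)$. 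Then \eqref{eq:x_2} defines $x_2=-A_{22}^{-1}(A_{21}x_1+B_2u)$, which lies in $C^1$ because $u\in C^1$. Conversely, every $C^1$ solution of \eqref{eq:state} must satisfy \eqref{eq:x_2} and \eqref{eq:x_1}, so the solution is unique.

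\emph{Continuous dependence.} Here I would work on a bounded set: fix a ball of radius $M$ in $W$ that contains $w$ and $\bar w$, and let $C$ depend on $M,\epsilon,T$ and $\|u\|_{C^1}$. Such a restriction is unavoidable, since the map $w\mapsto x$ is only locally Lipschitz (it contains products such as $A_{12}A_{22}^{-1}A_{21}$). The key algebraic estimate is the resolvent identity
\begin{equation*}
A_{22}^{-1}-\bar A_{22}^{-1}=A_{22}^{-1}(\bar A_{22}-A_{22})\bar A_{22}^{-1},
\end{equation*}
which gives $\|A_{22}^{-1}-\bar A_{22}^{-1}\|\le\epsilon^{-2}\|w-\bar w\|$. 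This is exactly where the uniform bound $\sigma_{\min}\ge\epsilon$ is needed, and I expect it to be the main obstacle the paper alludes to. Together with boundedness on the ball, a telescoping of the products yields
\begin{equation*}
\|A(w)-A(\bar w)\|+\|\tilde B(w)-\tilde B(\bar w)\|\le C_1\|w-\bar w\|,\qquad \|A(w)\|,\ \|\tilde B(w)\|\le C_2 .
\end{equation*}

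\emph{Estimate for $x_1$.} Set $e=x_1-\bar x_1$. It satisfies
\begin{equation*}
\dot e=A(w)e+(A(w)-A(\bar w))\bar x_1+(\tilde B(w)-\tilde B(\bar w))u,\qquad e(0)=x_{1,0}-\bar x_{1,0}.
\end{equation*}
Gronwall's lemma then gives $\|e\|_{C^0}\le e^{C_2T}\bigl(|x_{1,0}-\bar x_{1,0}|+TC_1\|w-\bar w\|(\|\bar x_1\|_{C^0}+\|u\|_{C^0})\bigr)$, where $\|\bar x_1\|_{C^0}$ is bounded uniformly by a Gronwall bound using $C_2$ and $M$. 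Substituting back into the equation for $\dot e$ bounds $\|\dot e\|_{C^0}$ by the same quantity.

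\emph{Estimate for $x_2$.} Differentiating \eqref{eq:x_2}, both $x_2-\bar x_2$ and $\dot x_2-\dot{\bar x}_2$ are combinations of $e$, $\dot e$, the matrix differences above, $u$ and $\dot u$. They are therefore bounded in $C^0$ by $C\|w-\bar w\|$.

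Finally, $\|\cdot\|_{H^1}\le\sqrt{2T}\,\|\cdot\|_{C^1}$ on $[0,T]$, which gives the claim. Here $\|\cdot\|_{\Uad}$ is interpreted as the Euclidean/Frobenius norm on the parameter space $W$.
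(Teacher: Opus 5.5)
Your proposal is correct and follows the same route as the paper's proof. Both reduce the DAE to the ODE for $x_1$ and the explicit formula for $x_2$, apply Gronwall to the differential part, estimate the algebraic part and its derivative directly, and then assemble the $H^1$ bound. Two of your additions make precise what the paper's constants $C_2,\dots,C_{10}$ leave implicit, since those constants silently depend on bounds for $\bar M$ and $x_1$ and on the Lipschitz continuity of $M,N,K,L$ in $w$: the restriction to a bounded parameter set, which is genuinely necessary because no global Lipschitz constant exists, and the resolvent identity for $A_{22}^{-1}-\bar A_{22}^{-1}$, which is exactly where $\sigma_{\min}\ge\epsilon$ enters.
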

\begin{proof}
    For given $w\in\Uad$ and $u\in C([0,T],\R^m)$ the existence of a unique solution to \eqref{eq:state} can be easily shown.
    For the second statement we estimate $\|x-\bar{x}\|_{L_2((0,T),\R^n)}$ and $\bigl\|\frac{\dd}{\dd t}x- \frac{\dd}{\dd t}\bar{x}\bigr\|_{L_2((0,T),\R^n)}$ separately. \\
    We consider $x$ again in terms of its differential and algebraic parts and obtain
    \begin{equation}\label{eq:lemma1}
        |x(t) - \bar{x}(t)|^2 = 
        |x_1(t) - \bar{x}_1(t)|^2 + |x_2(t) - \bar{x}_2(t)|^2,
    \end{equation}
    with 
    \begin{equation*}
    \begin{split}
       \dot{x}_1 &=\bigl( J_{11}-R_{11} - (J_{12}-R_{12})(J_{22}-R_{22})^{-1}(J_{21}-R_{21}) \bigr)x_1 \\
         & \qquad + \bigl(B_1 - (J_{12}-R_{12})(J_{22}-R_{22})^{-1} B_2 \bigr)u\\
         &=: Mx_1 + N u,\\
        x_2 &= -(J_{22}-R_{22})^{-1}(J_{21}-R_{21})x_1 -(J_{22}-R_{22})^{-1} B_2 u\\
        &=: K x_1 + Lu,
    \end{split}
     \end{equation*}
    according to equations \eqref{eq:x_1} and \eqref{eq:x_2}. We define $\bar M, \bar N, \bar K$ and $\bar L$ analogous for the solution corresponding to $\bar w$. The integral representation of the ODE then yields
    \begin{equation*}
        |x_1(t)-\bar{x}_1(t)|^2 \le 2 |x_{1,0}-\bar{x}_{1,0}|^2 + 2T \int_0^t |Mx_1(s) - \bar{M}\bar{x}_1(s) +(N-\bar{N})u(s)|^2 \dd s.
    \end{equation*}
    To estimate the integral, we add the term $\bar{M}x_1 - \bar{M}x_1$ to the integrant and obtain
    \begin{equation*}
    \begin{split}
        |M_1 x_1 -\bar{M}\bar{x}_1 &- \bar{M}x_1 + \bar{M}x_1 +(N-\bar{N})|^2 \\
        &= |(M-\bar{M})x_1 + \bar{M}(x_1-\bar{x}_1)+(N-\bar{N})u|^2  \\
        &\le  3 \|M-\bar{M}\|^2 |x_1|^2 + 3\|\bar{M}\|^2 |x_1-\bar{x}_1|^2+ 3\|N-\bar{N}\|^2 |u|^2.
    \end{split}
    \end{equation*}
    Inserting into the integral yields
    \begin{equation*}
    \begin{split}
        |x_1(t)-\bar{x}_1(t)|^2 &\le 2 |x_{1,0}-\bar{x}_{1,0}|^2 + 6T \int_{0}^{t} \|M-\bar{M}\|^2 |x_1|^2 \dd s + 6T \int_{0}^{t}\|\bar{M}\|^2 |x_1-\bar{x}_1|^2 \dd s
        \\ &\qquad \qquad \qquad +6T \int_{0}^{t}\|N-\bar{N}\|^2 |u|^2 \dd s\\
        &\le C_2 |w-\bar{w}|^2 +  \int_{0}^{t} C_3|x_1-\bar{x}_1|^2 \dd s.
    \end{split}
    \end{equation*}
    
    An application of Gronwall inequality yields
    \begin{equation}\label{eq:lemma_proof2}
        |x_1(t)-\bar{x}_1(t)|^2 \le C_4 |w-\bar{w}|^2 .
    \end{equation}
    For the algebraic part $x_2$, we obtain the following estimate
     \begin{equation*}
    \begin{split}
        |x_2(t) - \bar{x_2}(t)|^2  
        &= |(K - \bar{K})x_1 + \bar{K}(x_1 -\bar{x}_1 )+ (L-\bar{L})u|^2 \\
        &\le 3\|K-\bar{K}\|^2 |x_1|^2 + 3\|\bar{K}\|^2 |x_1 -\bar{x}_1|^2 + 3\|L-\bar{L}\|^2 |u|^2\\
        &\le C_5 |w-\bar{w}|^2 .
    \end{split}
    \end{equation*}
    In the final step, we used the estimate \eqref{eq:lemma_proof2} 
    to estimate the norm of $K-\bar{K}$ and $L-\bar{L}$. 
    
    By an integration over $[0,T]$ and using equation \eqref{lemma1}, we obtain
    \begin{equation}\label{eq:lemma_proof3}
        \|x-\bar{x}\|_{L^2((0,T),\R^n)} \le C_6 |w-\bar{w}|.
    \end{equation}
    The second part of the proof consists analogously of estimate the $L^2$-difference of the derivatives.
    For the differential part, we obtain 
    \begin{equation*}
        |\frac{\dd}{\dd t}(x_1(t)-\bar{x}_1(t))|^2 = C_7 |w-\bar{w}|^2  + \int_0^t C_8|\frac{\dd}{\dd s}(x_1-\bar{x}_1)|^2 \dd s.
    \end{equation*}
    For the algebraic part, we obtain 
     \begin{equation*}
    \begin{split}
        |\frac{\dd}{\dd t}(x_2(t)-\bar{x}_2(t))|^2 
        &\le 3\|K-\bar{K}\|^2 |\frac{\dd}{\dd t}x_1|^2 + 3\|\bar{K}\|^2 \frac{\dd}{\dd t}|x_1 -\bar{x}_1|^2 + 3\|L-\bar{L}\|^2 \frac{\dd}{\dd t}|u(t)|^2\\
        &\le C_9 |w-\bar{w}|^2 .
         \end{split}
    \end{equation*}
    Note $C_9$ depends on $\max\limits_{t\in[0,T]} \frac{d}{dt} |u(t)|^2 $, which is finite, since $u \in C^1([0,T],\R^m)$. 
    Again, Gronwall lemma and integration of both terms yields
    \begin{equation}\label{eq:lemma_proof4}
        \Bigl\|\frac{\dd}{\dd t}(x-x')\Bigr\|_{L^2((0,T),\R^n)} \le C_{10} |w-w'|^2.
    \end{equation}
    Adding \eqref{eq:lemma_proof3} and \eqref{eq:lemma_proof4} and taking square root yields the desired result.
\end{proof}
Thus, the state equation \eqref{eq:state} is well-defined, and we can introduce the \textit{solution operator}.
\begin{definition}
    The \textit{solution operator} $S$ is defined as 
    \begin{equation*}
       S\colon\Uad\to H^1((0,T),\R^n), \qquad S(w) = x.
     \end{equation*}
\end{definition}
Using this solution operator $S(w)$, we define the \textit{reduced cost functional}
\begin{equation}\label{eq:reduced}
            \hat{\J}(w) 
            = \frac{1}{2} \int_0^T | B^\top S(w)(t) - \yd(t) |^2 \dd t + \frac{\lambda}{2} |w-w_{\rm ref}|^2.
\end{equation}

Thus, the minimization Problem~\ref{problem} reduces to
\begin{equation*} 
   \min \hat{\J}(w), \qquad w\in \Uad, 
\end{equation*}
and the DAE constraint \eqref{eq:state} is only handled implicitly. More specifically, to evaluate the cost functional $\hat{\J}$, the state constraint is first solved, followed by the calculation of the gradient with $y$. The gradient-descent method used for the numerical results is based on the reduced problem formulation involving $\hat{\J}$.

Next, we define the \textit{state operator} and then show that it is weakly continuous.
\begin{definition}
    The mapping 
    \begin{gather}\label{eq:e}
    e \colon H^1((0,T),\R^n) \times \Uad \to L^2((0,T),\R^n) \times \R^r, \\
    \quad (x,w) \mapsto 
    \begin{pmatrix} E\frac{\dd}{\dd t} x - (J-R)x - Bu \\
               x_1(0) - x_{1,0}
    \end{pmatrix}
    \end{gather}
    with $w=(J,R,B,x_{1,0})$ and $E =\diag(I_r,0) $ 
    defines the state operator.
\end{definition}
Note that the state operator allows us to rewrite our optimization problem as 
\begin{equation*}
    \min_{x\in H((0,T),\R^n), \; w \in \Uad} J(x,w) \quad \text{ subject to } \quad e(x,w) = 0.
\end{equation*} 
This formulation of our identification problem will be used later for the Lagrange approach.
\begin{lemma}\label{lemma2}
    The state operator $e$, defined in \eqref{eq:e}, is weakly continuous.
\end{lemma}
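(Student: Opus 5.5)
To prove Lemma~\ref{lemma2}, take sequences $x^k \rightharpoonup x$ weakly in $H^1((0,T),\R^n)$ and $w^k=(J^k,R^k,B^k,x_{1,0}^k)\rightharpoonup w=(J,R,B,x_{1,0})$ in $\Uad$. Since the parameter space $W$ is finite dimensional, weak and strong convergence coincide there, so $w^k\to w$ in norm; in particular $\|J^k-J\|,\|R^k-R\|,\|B^k-B\|,|x^k_{1,0}-x_{1,0}|\to 0$. The limit lies in $\Uad$ because $\Uad$ is closed. The goal is then to show $e(x^k,w^k)\rightharpoonup e(x,w)$ weakly in $L^2((0,T),\R^n)\times\R^r$, treating the two components separately.

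For the first component, write
\begin{equation*}
E\dot x^k-(J^k-R^k)x^k-B^ku = E\dot x^k-(J-R)x^k-Bu - \bigl((J^k-J)-(R^k-R)\bigr)x^k - (B^k-B)u .
\end{equation*}
The map $x\mapsto E\dot x-(J-R)x-Bu$ is affine and continuous from $H^1$ to $L^2$, since differentiation is a bounded linear map $H^1\to L^2$ and multiplication by a constant matrix is bounded on $L^2$. Bounded linear operators are weak-weak continuous, so this part converges weakly to $E\dot x-(J-R)x-Bu$. The remainder terms tend to zero strongly in $L^2$. Weakly convergent sequences are bounded, so $\sup_k\|x^k\|_{H^1}<\infty$ and hence $\|((J^k-J)-(R^k-R))x^k\|_{L^2}\le \bigl(\|J^k-J\|+\|R^k-R\|\bigr)\sup_k\|x^k\|_{L^2}\to0$. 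Likewise $\|(B^k-B)u\|_{L^2}\le\|B^k-B\|\,\|u\|_{L^2}\to0$, using $u\in C^1\subset L^2$. A weakly convergent sequence plus a strongly null sequence converges weakly, which settles the first component.

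For the second component, $x^k_1(0)-x^k_{1,0}$, I would use that the trace $x\mapsto x_1(0)$ is a bounded linear functional on $H^1((0,T),\R^n)$. This follows from the one-dimensional Sobolev embedding $H^1(0,T)\hookrightarrow C([0,T])$, with $|x(0)|\le C\|x\|_{H^1}$. Hence $x_1^k(0)\rightharpoonup x_1(0)$ in $\R^r$, and weak convergence in finite dimensions is strong. Together with $x^k_{1,0}\to x_{1,0}$, this gives convergence of the second component.

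The only step needing care is the product term $(J^k-R^k)x^k$, where two sequences converge simultaneously; the product of two weakly convergent sequences need not converge weakly in general. The argument above gets around this by using strong convergence of the finite-dimensional parameters together with the uniform boundedness of $(x^k)$. If an even stronger statement is wanted, the compact embedding $H^1(0,T)\hookrightarrow L^2(0,T)$ (Rellich) gives $x^k\to x$ strongly in $L^2$, so every non-derivative term converges strongly. Only $E\dot x^k$ is then merely weakly convergent, but this is unnecessary for weak continuity.
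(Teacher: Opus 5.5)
Your proposal is correct and follows essentially the same route as the paper: you split the first component into a term linear in $x^k$ with the limit parameters $(J,R,B)$, plus parameter-difference terms that vanish by norm convergence in the finite-dimensional parameter space and boundedness of $(x^k)$, and you treat the initial-value component via the embedding $H^1\hookrightarrow C$. The paper writes the same decomposition after testing against $\varphi\in L^2((0,T),\R^n)$, which is equivalent to your weak-to-weak continuity argument for bounded linear maps.
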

\begin{proof}

    We consider $\{x_n\}_n \subset H^1((0,T),\R^n)$ with $x_n \rightharpoonup x$ and $\{w_n\}_n \subset \Uad$ with $w_n \rightharpoonup w$.
    Note that $\Uad$ is finite dimensional, thus weak and strong convergence coincide. Then for $\varphi\in L^2((0,T),\R^n)$ we obtain
    \begin{equation*}
    \begin{split}
        &\int_0^T \Bigl( E\frac{\dd}{\dd t} x_n - (J_n-R_n)x_n(t) - B_n u(t)\Bigr) \cdot \, \varphi(t) \, \dd t \\
        & \qquad - \int_0^T \Bigl( E\frac{\dd}{\dd t} x
        - (J-R) x(t) - B u(t)\Bigr) \cdot\varphi(t)\, \dd t  \\
        &= \int_0^T \biggl(\Bigl( E\frac{\dd}{\dd t} x_n - E\frac{\dd}{\dd t}x \Bigr) + (B-B_n)u(t)\biggr) \cdot \, \varphi(t)\, \dd t \\ 
        &+ \int_0^T \Big[Jx(t) -J_n x_n(t) + R_n x_n(t) -Rx(t) +\underbrace{Jx_n(t)-Jx_n(t) + Rx_n(t)-Rx_n(t)}_{=0} \Big] \, \cdot\varphi(t) \, \dd t \\  
        &= \int_0^T \biggl(\Bigl( E\frac{\dd}{\dd t} x_n - E\frac{\dd}{\dd t}x \Bigr) + (B-B_n)u(t)\biggr) \cdot \, \varphi(t)\, \dd t \\ 
        &\qquad + \int_0^T
        \Big[
        ((J-J_n)-(R-R_n))x_n(t) \, \cdot\varphi(t) + (J-R)(x(t)-x_n(t)) \, \cdot\varphi(t) \Big] \dd t.
    \end{split}
  \end{equation*}
  
    By the weak convergence of $x_n$ and $w_n$  we conclude that all terms tend to zero as $n\to \infty$. 
    Note that weak convergence of $\{x_n\}_n$ implies boundedness of the sequence $\{x_n\}_n$. 
    Moreover, by the embedding $H^1((0,T),\R^n) \hookrightarrow C([0,T],\R^n)$ we conclude that $x_n(0) \to x(0)$ and further we obtain $(x_{1,0})_n \to x_{1,0}$ by the convergence of $w_n$, which completes the proof.
\end{proof}
\begin{theorem}
    Let $\lambda >0$ or $\Uad$ bounded. Then, the 
    problem~\eqref{problem} admits a global minimum.
\end{theorem}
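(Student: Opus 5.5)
I plan to use the direct method of the calculus of variations on the reduced formulation $\min_{w\in\Uad}\hat{\J}(w)$, where $\hat{\J}$ is given by \eqref{eq:reduced}. Since $\hat{\J}\ge 0$, the infimum $j^\ast:=\inf_{w\in\Uad}\hat{\J}(w)$ is finite. I choose a minimizing sequence $\{w_k\}_k\subset\Uad$ with $\hat{\J}(w_k)\to j^\ast$.

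The first step is boundedness of the sequence. If $\Uad$ is bounded, this is immediate. If instead $\lambda>0$, then
\begin{equation*}
\frac{\lambda}{2}|w_k-w_{\rm ref}|^2\le \hat{\J}(w_k)\le \hat{\J}(w_1)
\end{equation*}
for all $k$ large enough, after discarding finitely many terms. Hence $|w_k|$ is bounded, and $\hat{\J}$ is coercive on $\Uad$. Because the parameter space $W$ is finite dimensional, Bolzano--Weierstrass gives a subsequence, again denoted $w_k$, with $w_k\to\bar w$ strongly. Since $\Uad$ is closed (the singular value $\sigma_{\min}$ depends continuously on the entries, and the conditions $J^\top=-J$ and $R\succeq0$ are closed), we get $\bar w\in\Uad$.

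The second step is passing to the limit in the state and the output. Lemma~\ref{lemma1} gives $\|S(w_k)-S(\bar w)\|_{H^1}\le C\|w_k-\bar w\|\to 0$, so $x_k:=S(w_k)\to \bar x:=S(\bar w)$ in $H^1$, and in particular in $L^2$. Then
\begin{equation*}
B_k^\top x_k-B^\top_{\bar w}\bar x=(B_k-\bar B)^\top x_k+\bar B^\top(x_k-\bar x),
\end{equation*}
and both terms tend to zero in $L^2$, since $\|x_k\|_{L^2}$ is bounded and $B_k\to\bar B$. Consequently $\J_1(B_k^\top x_k)\to\J_1(\bar B^\top\bar x)$, and $\J_2(w_k)\to\J_2(\bar w)$ by continuity. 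Hence $\hat{\J}(\bar w)=\lim_k\hat{\J}(w_k)=j^\ast$, so $\bar w$ is a global minimizer. The pair $(\bar B^\top\bar x,\bar w)$ is feasible for Problem~\ref{problem}, because $e(\bar x,\bar w)=0$. One can also reach this via Lemma~\ref{lemma2}: $x_k$ is bounded in $H^1$, so a weak limit exists, and weak continuity of $e$ together with uniqueness from Lemma~\ref{lemma1} identifies that limit with $S(\bar w)$. Lower semicontinuity of the convex functional $\J_1$ then suffices in that route.

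The main obstacle is that the constant $C$ in Lemma~\ref{lemma1} must not degenerate along the sequence. This constant depends on bounds for $\|(J_{22}-R_{22})^{-1}\|$ and on the norms of the matrices. The first is controlled uniformly by $\epsilon^{-1}$ through the definition of $\Uad$, and the second by the boundedness established in the first step. I would check that the Gronwall constants $C_2,\dots,C_{10}$ depend only on these bounds, on $T$, and on $\|u\|_{C^1}$. That uniformity is exactly what the compactness argument requires.
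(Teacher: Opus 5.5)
Your proof is correct. It uses the same direct method the paper has in mind, but you pass to the limit by a different mechanism.

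The paper's proof only points to the pH-ODE reference, invoking Lemma~\ref{lemma1} and the weak continuity of $e$ from Lemma~\ref{lemma2}. In that template one bounds the minimizing pairs $(y_k,w_k)$ in $Y\times W$ and extracts weakly convergent subsequences. Weak continuity of $e$ then shows that the weak limit satisfies the state equation, and weak lower semicontinuity of the convex functional $\J$ gives optimality.

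Your main route instead uses that $W$ is finite dimensional and that Lemma~\ref{lemma1} gives strong (Lipschitz) continuity of $S$ into $H^1$. Hence $\hat{\J}$ is continuous on the closed set $\Uad$, and the claim reduces to the Weierstrass theorem for a continuous coercive function (or a continuous function on a compact set). Lemma~\ref{lemma2} and the lower-semicontinuity step become unnecessary. The paper's template is the one that carries over to settings where only weak convergence of the states is available. Yours is shorter and makes clear that $\epsilon$ is needed only to keep the limit $\bar w$ in $\Uad$, where $S$ is defined.

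Your concern about the constant in Lemma~\ref{lemma1} is legitimate, since the lemma as stated lets $C$ depend on $w,\bar w$. It affects the paper's route too, which needs a uniform $H^1$ bound on the states of the minimizing sequence. Your check via $\|A^{-1}-\bar A^{-1}\|\le\epsilon^{-2}\|A-\bar A\|$ for $A=J_{22}-R_{22}$ settles it. In your route it is even automatic: once $w_k\to\bar w\in\Uad$, all $w_k$ eventually lie in a small ball around $\bar w$ where the constants are bounded.

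One small slip: $\hat{\J}(w_k)\le\hat{\J}(w_1)$ need not hold for an arbitrary minimizing sequence. However, $\hat{\J}(w_k)\le j^\ast+1$ for large $k$ is all you actually use.
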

\begin{proof}
    With the continuous dependence on the date shown in Lemma~\ref{lemma1} and the weak continuity of the state operator discussed in Lemma~\ref{lemma2}, the statement follows as in \cite{guntherDatadrivenAdjointbasedCalibration2024}.
\end{proof}
We note that the global minimum does not have to be unique since our problem is generally non-convex. The non-convexity stems from the nonlinearity introduced by the reformulation using the inverse of $(J_{22} - R_{22})$.
In the following section, we will discuss how to find a global minimum using a gradient-based algorithm. 

\section{Numerical Optimization}\label{sec:optimization}
Here, we show how to determine an optimal parameter vector $w_{\rm opt}$ that minimizes the cost functional locally, by means of  
an iterative, gradient-based algorithm. 
The main challenge is to remain within the admissible set $\Uad$ at each step. Below, we discuss two approaches: a barrier term in the cost functional and a heuristic approach that exploits the structure of $J_{22}$ and $R_{22}$. 
In the following we assume that the dimension of the internal state $n$ and the number of differential variables $r$ is known. If this is not the case, it may be necessary to solve several optimization problems with different hyperparameters $n$ and $r$. 
We will examine this in the numerical examples.
Let us recall that the minimization problem reads
\begin{equation*} 
   \min \hat{\J}(w), \qquad w\in \Uad, 
\end{equation*}
with
\begin{equation*}
\Uad :=\Big\{ w = (J,R,B,x_{1,0}) \in W \colon \quad J^\top = -J,\quad R \succeq 0,\quad \sigma_{min}(J_{22}-R_{22}) \ge \epsilon >0 \Big\}.
\end{equation*}
Neglecting the regularity condition, the set $\Uad$ describes a type of matrix manifold.
Consequently, optimization techniques on matrix manifolds can be applied, as was done in \cite{absilOptimizationAlgorithmsMatrix2008}. Since we have to deal with the regularity constraint, which couples the conditions on $J$ and $R$, we choose to follow a parametrization approach \cite{cherifiNonlinearPortHamiltonianSystem2025} in the following.

That is, instead of restricting the optimization to skew-symmetric matrices $J$, we optimize over all matrices $K \in \R^{n\times n}$ and define $J = K-K^\top$. 
Similarly, the symmetric positive semi-definite matrix $R$ can be parameterized by $LL^\top$ (where $L \in \R^{n\times n}$), since $x^\top (LL^\top)x=\|L^\top x\|^2 \ge 0$. 
One disadvantage here is that the linear structure is lost and there are more degrees of freedom. 
To avoid increasing the degree of freedom, we could restrict $L$ to lower triangular matrices and $K$ to strictly lower triangular matrices.
However, for simplicity, we will use the parametrization introduced above and consider
\begin{equation*} 
   \min \J(\bar{w}), \qquad \bar{w}=(K,L,B,x_{1,0}) \in W := \R^{n\times n} \times \R^{n\times n} \times \R^{n\times m} \times \R^r, \end{equation*}
subject to
\begin{align*}
    E\dot{x} &= (K-K^\top - LL^\top) x + Bu, \qquad x_1(0)=x_{1,0}\in \R^r, \\
    y &= B^\top x.
\end{align*} 
In the numerical implementation, we concatenate the matrices $L,K$ and $B$ as well as the initial condition $x_{1,0}$ and work with $v\in\R^z$ where $z=2n+mn+r$ instead.

%

The main challenge is to deal with the constraint that $\sigma_{min}(J_{22}-R_{22})\ge\epsilon$ or $J_{22}-R_{22}$ is regular. Below we will discuss two possible approaches. The first one is based on a barrier term, which is added to the cost functional to prevent the iterates $v^k$  from approaching the singularities, see \cite{nocedalNumericalOptimization2006a}.
In our setting, a possible penalty term is given by
\begin{equation}\label{eq:mudet}
    - \mu \log\det(A^\top A), \qquad \text{with}\quad A=(J_{22}-R_{22}) \quad\text{and}\quad\mu >0.
\end{equation}
We can rewrite the term in \eqref{eq:mudet} as 
\begin{equation*}
    -\mu \log \det(A^\top A) = - \mu\log(\det (A)^2) = -2 \mu \log(|\det(A)|) = -2\mu \log(|\det (J_{22}-R_{22})|).
\end{equation*}
Specifically, as $J_{22} - R_{22}$ approaches a singularity, we have $|\det(J_{22} - R_{22})| \to 0^+$.
Consequently, the penalty term tends to infinity, which prevents the iterates from reaching singular matrices.
Furthermore, this method is consistent with our original condition, since 
\begin{equation*}
    \log\det(A^\top A) = \log\prod_{i=1}^k \sigma_i^2 
    = \sum_{i=1}^k \log \sigma_i^2.
\end{equation*}
However, this approach has two disadvantages. First, a sequence of $\{\mu_i\}$ converging to zero must be chosen to solve the original problem
\begin{equation*}
    \min \J_1(y) + \J_2(w) + \bigl(-\mu_i \log(|\det (J_{22}-R_{22})|)\bigr) \qquad \text{with}\quad \mu_i \to 0  
    \quad\text{for}\quad i=1,2,\dots .
\end{equation*}
On the other hand, the gradient of the term $-\log |\det (A)|$ is given by $A^{-\top}$. Consequently, the full inverse $(J_{22}-R_{22})^{-1}$ must be computed at every evaluation of $\hat{\J}$, which can be very expensive.
Specifically, solving the matrix equation $AX=I_{n-r}$ is required to obtain $A^{-1} \coloneq X\in \R^{(n-r)\times (n-r)}$. 

The second idea for avoiding singularities is a heuristic approach. 
Since the matrix $J_{22}$ is skew-symmetric and $R_{22}$ is positive semi-definite, we have $x^\top J_{22}x - x^\top R_{22}x = -x^\top R_{22}x \le 0$ for all $x\in \R^{n-r}$. Consequently, $J_{22}-R_{22}$ can only be singular if there exists a nonzero vector $x$ such that $J_{22}\,x=0$ and $R_{22}\,x=0$ and therefore $\operatorname{ker} (J_{22}) \cap \operatorname{ker} (R_{22})$ is nontrivial. 
Since the solution to our problem is computed iteratively using a gradient-based method, it is difficult to keep track of this condition. However, suppose that $v^k$ is an admissible iterate, meaning that $J_{22}-R_{22}$ is regular. 
We then compute a new iterate $v^{k+1}$ using a search direction $s^k$ and a step size $\sigma_k$, which indicates how far we move in that direction, and set $v^{k+1}\coloneq v^k+\sigma_k s^k$. 
Note that the cost functional can only be evaluated if the state equation is solvable. 
One way to avoid singularities is therefore to return an error message when the state equation cannot be solved and reject the corresponding step size. 

\begin{remark}
    We report that $v^k \in \operatorname{ker} (J_{22}) \cap \operatorname{ker} (R_{22})$ rarely occurred in the numerical examples.
\end{remark}

\subsection{The Gradient Descent Method}
Since we reformulated our identification problem as an unconstrained optimization problem, we aim to apply standard techniques from unconstrained nonlinear optimization (see e.g.\ \cite{hinzeOptimizationPDEConstraints2009,nocedalNumericalOptimization2006a}) to the reduced cost functional $\hat \J(w)$. Since its definition involves the state solution operator, we determine $\nabla \hat \J$ with the help of an adjoint-based approach. In \cite{guntherStructurepreservingIdentificationPortHamiltonian2023}, this approach was used for to solve the calibration task for pH-ODEs.
We follow the lines of \cite{guntherDatadrivenAdjointbasedCalibration2024,hinzeOptimizationPDEConstraints2009} and define the Lagrange function $L\colon H^1((0,T),\R^n))\times \Uad \times Z^*\to\R$ given by
\begin{equation*}
   L(x,w,\bm{p}) = \J(x,w) - \langle \bm{p},e(x,w) \rangle_{Z^*,Z},
\end{equation*}
where $\bm{p}\in Z^*$ denotes the Lagrange multiplier or adjoint variable, and $e(x,w)$ is the state operator \eqref{eq:e}. 
Note that  $Z=L^2((0,T),\R^n)\times \R^r$ and the dual space is given by $Z^*=L^2((0,T),\R^n)\times \R^r$. 
Thus, the dual pairing can be written as follows:
\begin{equation*}
    \langle \bm{p}, e(x,w) \rangle_{Z^*,Z}
    =  \langle p, \tfrac{\dd}{\dd t}x - (J-R)x - Bu  \rangle_{L^2((0,T),\R^n)} + \langle p_{1,0}, x_1(0)-x_{1,0} \rangle_{\R^r}.
\end{equation*}
For simplicity, we use the notation $\bm{p}=(p,p_{1,0}) \in Z^*$.
The first-order optimality system is derived by solving $dL=0$. For $h_x \in H^1((0,T),\R^n)$, we compute the directional derivative
\begin{align*} 
d_x &L(x,w,\bm{p})[h_x] 
= d_x\J(x,w)[h_x] - d_x \langle e(x,w)[h_x],\bm{p} \rangle \\
&= \frac{1}{2}\int_0^T d_x |B^\top x-\yd|^2 [h_x] \,\dd t\\
& \qquad -  \int_0^T d_x\biggl( \Bigl( E\frac{\dd}{\dd t}x - (J-R)x - Bu \Bigr) \cdot p \biggr) [h_x] \, \dd t - d_x \bigl((x_1(0)-x_{1,0}) \cdot p_{1,0}\bigr)[h_x] \\
%
&= \int_0^T B(y-\yd) \cdot h_x + (J-R)h_x \cdot p \, \dd t - \int_0^T \Bigl( E\,\frac{\dd}{\dd t}h_x\Bigr) \cdot p \, \dd t  - h_{1,x}(0)\cdot p_{1,0}
\end{align*}
Next, we use integration by parts for the second integral and obtain
\begin{align*}
    &\int_0^T B(y -\yd) \cdot h_x + (J-R)h_x \cdot p \, \dd t - \big[ E h_x\cdot p \big]_0^\top + \int_0^T E h_x \cdot \frac{\dd}{\dd t}p \, \dd t- h_{1,x}(0) \cdot p_{1,0} \\
    &=\int_0^T B(y-\yd) \cdot h_x + E\frac{\dd}{\dd t}p \cdot h_x + (J^\top - R^\top)p \cdot h_x \dd t - p_1(T)\cdot h_x(T).
\end{align*}
In the second equation, we set $p_{1,0} \coloneq p_1(0)$ and used $Eh_x(0)\cdot p(0) = h_x(0)^\top \diag(I_r,0)p(0) \\ = h_{1,x}(0)^\top p_{1,0}$.
As $d_x L(x,w,\bm{p})[h_x] = 0$ must hold for arbitrary $h_x$ we identify the \textit{adjoint equation} as
\begin{equation}\label{eq:adjoint}
    -E\frac{\dd}{\dd t} p = (J^\top - R^\top) p + B(y- \yd), \qquad p_1(T)=0.
\end{equation}
Like the state equation, this adjoint equation is an pH-DAE of index-1, since $J^\top$ is a skew-symmetric matrix and $R^\top = R \succeq0$. 
The index-1 condition follows directly from $\det((J_{22}-R_{22})^\top )= \det(J_{22}-R_{22})\not = 0$.
The difference is that the equation is solved backward in time with end condition $p_1(T)=0$, and $p_{2,T}$ is calculated similarly to $x_{2,0}$. 

The derivative $d_{\bm{p}} L(x,w,{\bm{p}})[h_{\bm{p}}] = 0$ yields the state equation \eqref{eq:state}. 
The optimality condition is derived as
\begin{equation}
    \begin{split}  
        d_w L(x,w,{\bm{p}})[h_w] &= d_w\J(x,w)[h_w] - d_w \langle e(x,w),{\bm{p}} \rangle[h_w] \\
        &= d_w\J_1(x,w)[h_w] + d_w\J_2(w)[h_w] - d_w \langle e(x,w),{\bm{p}} [h_w]\rangle.
    \end{split}
\end{equation}
For simplicity, we split the derivation into the different matrices and the initial condition. Moreover, we omit the term $\J_2$ during the calculations and will later add the term $\nabla \J_2$ to obtain the full gradient.
For the derivative w.r.t.\ to $x_{1,0}$, we obtain
\begin{equation}
     d_{x_{1,0}} \langle e(x,w),{\bm{p}} \rangle [h_{x_{1,0}}]=  \langle p_{1,0},h_{x_{1,0}}\rangle_{\R^r} = (p_{1,0},h_{x_{1,0}})_{\R^r}.
\end{equation}
Since $h_{x_{1,0}}$ was chosen arbitrarily, the Riesz representation theorem 
allows us to identify $\nabla_{x_{1,0}} \hat{\J}(w)= p_1(0) +\nabla_{x_{1,0}} \J_2(w)$. The gradients w.r.t.\ the matrices are derived analogously.
Thus, for the derivative with respect to matrix $J$, we obtain
with the Frobenius inner product
\begin{equation*}
\begin{split}
    d_J \langle e(x,w),{\bm{p}} \rangle[h_J] 
    &= -\int_0^T d_J \biggl( \Bigl (E \frac{\dd}{\dd t} - (J-R)x - Bu \Bigr)\cdot p \biggr) [h_J]\, \dd t \\
    &=  \int_0^T h_J x \cdot p \,\dd t = \int_0^T p^\top h_J x  \,\dd t 
    = \int_0^T \tr (p^\top h_J x)  \,\dd t 
    = \int_0^T \tr (x p^\top h_J) \, \dd t \\
    &= \int_0^T  \bigl \langle p x^\top, h_J\bigr \rangle_F \;\dd t = \Bigl\langle \int_0^T  p x^\top \, \dd t, h_J\Bigr\rangle_F .
\end{split}
\end{equation*}
From the last equation, we conclude that the gradient with respect to $J$ is equal to $\int_0^T  p x^\top \,\dd t$. 
The same calculation applies to matrix $R$, but with the opposite sign. The last directionally derivative with respect to matrix $B$ is given by
\begin{equation*}
    \begin{split}
        d_B& \J_1(x,w)[h_B] - d_B \langle e(x,w),{\bm{p}} \rangle [h_B] \\
        &= \frac{1}{2}\int_0^T d_B |B^\top x-\yd|^2 [h_B] \;\dd t - \int_0^T d_B \biggl( \Bigl(E \frac{\dd}{\dd t} - (J-R)x - Bu \Bigr)\cdot p \biggr) [h_B] \;\dd t\\ 
        &= \int_0^T x^\top h_B (B^\top x-\yd) + p^\top  h_B u \; \dd t 
        = \int_0^T \tr \bigl(x^\top h_B (y-\yd)\bigr) + \tr (p^\top h_B u) \; \dd t \\
        %
        %
        %
        &= \int_0^T \bigl\langle x(y-\yd)^\top, h_B  \bigr\rangle_F + \bigl\langle pu^\top, h_B  \bigr\rangle_F \; \dd t 
        =\Bigl\langle \int_0^T  x(y-\yd)^\top + pu^\top \dd t, h_B  \Bigr\rangle_F .
    \end{split}
\end{equation*}
The Riesz representation theorem once again allows us to identify the gradient
    \begin{align}
            0&=\nabla_J \J_2(\bar{w}) + \int_0^T \bar{p}(t) \otimes \bar{x}(t) \dd t, \\
            0&=\nabla_R \J_2(\bar{w}) - \int_0^T \bar{p}(t) \otimes \bar{x}(t) \dd t, \\
            0&=\nabla_B \J_2(\bar{w}) + \int_0^T \bigl( \bar{p}(t) \otimes u(t) + (\bar{x}(t)) \otimes (\bar{y}(t)-\yd(t))  \bigr) \dd t, \\
            0&=\nabla_{x_{1,0}} \J_2(w) + \bar{p}_1(0),
        \end{align}
        where $\bar{p}$ satisfied the adjoint equation
\begin{equation}
    -E\frac{\dd}{\dd t} \bar{p} = (\bar{J}^\top - \bar{R}^\top) \bar{p} + \bar{B}(\bar{y}- \yd), \qquad \bar{p}_1(T)=0,
\end{equation}
and $\bar{x}$ the state equation with output $\bar{y}$ given by
\begin{subequations}
    \begin{align*}
        E\dfrac{\dd}{\dd t} \bar{x} &= (\bar{J} - \bar{R})\bar{x} + \bar{B}u, \qquad \bar{x}_1(0) = \bar{x}_{1,0}, \\
        \bar{y} &= \bar{B}^\top \bar{x},
    \end{align*}
\end{subequations}
where $a \otimes b$ denotes the dyadic product $ab^\top$ for $a,b \in \R^n$.

Now, we apply the transformation we previously introduced and obtain for $J=K-K^\top$ and $R=L L^\top$ that it holds
\begin{equation}\label{eq:chain_rule}
    \begin{aligned}
       \nabla_K \hat{\J}(w) &= \nabla_J \hat{\J}(w) - \nabla_J \hat{\J}(w) ^\top, \\
        \langle \hat{\J}'(w),h_L\rangle_F 
        &= \tr(\hat{\J}_R(w)^\top h_L L^\top ) + \tr (\hat{\J}_R(w)^\top L h_L^\top)
        =  \langle \nabla_R \hat{\J}(w) L + \nabla_R \hat{\J}(w)^\top L, h_L \rangle_F  \\
       &\Rightarrow \nabla_L \hat{\J}(w) = \bigl(\nabla_R \hat{\J}(w) + \nabla_R \hat{\J}(w)^\top \bigr) L.
    \end{aligned}
\end{equation}
Due to the transformation, the problem is unconstrained and we can summarize the derivation with the help of the following first-order optimality condition.

\begin{theorem}
    A stationary point $\bar{v}=(\bar{K},\bar{L},\bar{B},\bar{x}_{1,0})$ of the transformed reduced problem satisfies the optimality condition
    \begin{subequations}\label{eq:nablaJ}
        \begin{align}
            0&=\nabla_K \hat{\J}(\bar v) = \nabla_K \J_2(\bar{v}) + \int_0^T \bar{p}(t) \otimes \bar{x}(t) \dd t - \int_0^T \bar{x}(t) \otimes \bar{p}(t) \dd t, \\
            0&= \nabla_L \hat{\J}(\hat v)  =\nabla_L \J_2(\bar{v}) - \left(\int_0^T \bar{p}(t) \otimes \bar{x}(t) \dd t + \int_0^T \bar{x}(t) \otimes \bar{p}(t) \dd t \right) \bar{L}, \\
            0&=\nabla_B \J_2(\bar{w}) + \int_0^T \bigl( \bar{p}(t) \otimes u(t) + (\bar{x}(t)) \otimes (\bar{y}(t)-\yd(t))  \bigr) \dd t, \\
            0&=\nabla_{x_{1,0}} \J_2(w) + \bar{p}_1(0),
        \end{align}
    \end{subequations}
where $\bar{p}$ satisfied the adjoint equation
\begin{equation}
    -E\frac{\dd}{\dd t} \bar{p} = (\bar{K}^\top -\bar{K} - \bar{L}^\top\bar{L}) \bar{p} + \bar{B}(\bar{y}- \yd), \qquad \bar{p}_1(T)=0,
\end{equation}
and $\bar{x}$ the state equation with output $\bar{y}$ given by
\begin{subequations}
    \begin{align*}
        E\dfrac{\dd}{\dd t} \bar{x} &= (\bar{K} - \bar{K}^\top - \bar{L}\bar{L}^\top)\bar{x} + \bar{B}u, \qquad \bar{x}_1(0) = \bar{x}_{1,0}, \\
        \bar{y} &= \bar{B}^\top \bar{x},
    \end{align*}
\end{subequations}
where $a \otimes b$ denotes the dyadic product $ab^\top$ for $a,b \in \R^n$.
\end{theorem}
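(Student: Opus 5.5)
The plan is to obtain the optimality system from the already derived gradients of $\hat{\J}$ with respect to $(J,R,B,x_{1,0})$ and then apply the chain rule for the parametrization $J=K-K^\top$, $R=LL^\top$. Since the transformed problem is unconstrained on the open set of parameters $v=(K,L,B,x_{1,0})$ for which $J_{22}-R_{22}$ is regular, a stationary point $\bar v$ is characterized by $\nabla_v\hat{\J}(\bar v)=0$. So it suffices to identify each partial gradient.

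\textbf{Step 1: reduced gradients.} I would justify that the reduced gradient is given by the Lagrangian formulas. For fixed $w$, let $\bar x=S(w)$ (Lemma~\ref{lemma1}) and let $\bar p$ solve the adjoint equation \eqref{eq:adjoint}. This adjoint equation is again an index-1 DAE, since $(J_{22}-R_{22})^\top$ is regular, so by the same argument as in Lemma~\ref{lemma1} (run backward in time) it has a unique solution with $p_1(T)=0$. Then $\hat{\J}(w)=L(S(w),w,\bar p)$. Differentiating gives $d\hat{\J}(w)[h_w]=d_xL[S'(w)h_w]+d_wL[h_w]$. The first term vanishes by the choice of $\bar p$, via the integration by parts carried out above.

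Differentiability of $S$ follows from the explicit representation $x_1'=Mx_1+Nu$, $x_2=Kx_1+Lu$. Here $M,N,K,L$ are smooth in $w$ on the set where $(J_{22}-R_{22})^{-1}$ exists, and linear ODE solutions depend smoothly on coefficients. The remaining term $d_wL$ yields the already computed formulas:
\begin{itemize}
\item $\nabla_J\hat{\J}=\nabla_J\J_2+\int_0^T p\,x^\top\,\dd t$,
\item $\nabla_R\hat{\J}=\nabla_R\J_2-\int_0^T p\,x^\top\,\dd t$,
\item $\nabla_B\hat{\J}=\nabla_B\J_2+\int_0^T\bigl(p\,u^\top+x(y-\yd)^\top\bigr)\,\dd t$,
\item $\nabla_{x_{1,0}}\hat{\J}=\nabla_{x_{1,0}}\J_2+p_1(0)$.
\end{itemize}
The sign conventions must match the dual pairing chosen above.

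\textbf{Step 2: chain rule.} I would use \eqref{eq:chain_rule}.
\begin{itemize}
\item For $K$: $dJ[h_K]=h_K-h_K^\top$, so $\langle G_J,h_K-h_K^\top\rangle_F=\langle G_J-G_J^\top,h_K\rangle_F$. This gives $\int p x^\top-\int x p^\top$ for the data part, and $\J_2$ is handled directly in the $v$ variables.
\item For $L$: $dR[h_L]=h_LL^\top+Lh_L^\top$, which gives $(G_R+G_R^\top)L$. Inserting $G_R=-\int p x^\top$ yields $-\bigl(\int p\otimes x+\int x\otimes p\bigr)\bar L$.
\item $B$ and $x_{1,0}$ are unchanged.
\end{itemize}
Finally, substituting $J=\bar K-\bar K^\top$, $R=\bar L\bar L^\top$ into the state and adjoint equations gives the stated systems. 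Note that $J^\top-R^\top=\bar K^\top-\bar K-\bar L\bar L^\top$, so the adjoint equation in the statement should read $\bar L\bar L^\top$; I would point out this typo.

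\textbf{Main obstacle.} The delicate step is Step~1, namely rigorously differentiating $\hat{\J}$ through the DAE constraint. Two points need care. First, the integration by parts involves only $E$: the boundary term is $[Eh_x\cdot p]_0^T$, so only the differential components $p_1$ need terminal conditions, and the algebraic components $p_2(T)$ are determined consistently. Second, the test direction $h_x=S'(w)h_w$ must satisfy the linearized initial constraint on $h_{x,1}(0)$. I would handle both by working in the reduced ODE coordinates $x_1$, where $\hat{\J}$ is a smooth composition, and verifying that the adjoint-derived formula equals the direct sensitivity formula.
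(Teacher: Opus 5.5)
Your proposal is correct and follows essentially the paper's route: the Lagrangian/adjoint computation of the gradients with respect to $(J,R,B,x_{1,0})$, which the paper justifies via the implicit function theorem (equivalent to your identity $\hat{\J}(w)=L(S(w),w,\bar p)$ with $d_xL=0$), followed by the chain rule for $J=K-K^\top$, $R=LL^\top$. You are right that the adjoint equation should contain $\bar L\bar L^\top$ rather than $\bar L^\top\bar L$. Your second ``obstacle'' is not actually one: with the multiplier $p_{1,0}=p_1(0)$ one has $d_xL[h_x]=0$ for all $h_x\in H^1$, so no restriction on $h_{x,1}(0)$ is needed.
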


Note that it is difficult to solve the first-order optimality system altogether due to its forward-backward structure. 
Instead, we proceed iteratively and first solve the state equation in the forward direction, followed by the adjoint equation in the backward direction. 
The gradient \eqref{eq:nablaJ}, which is computed based on the state and adjoint variables, then allows us to define a suitable descent direction.
Finally, we formulate all steps in Algorithm \ref{alg_with_gradient}.
\vspace{1em}

\begin{algorithm}[H]
\caption{Algorithm for the calibration process}\label{alg_with_gradient}
\KwData{initial guess $v$, algorithmic parameters, stopping criterion}
\KwResult{optimized matrices and initial data $\bar{w}=(\bar{J},\bar{R},\bar{B},\bar{x}_{1,0})$}
\SetCustomAlgoRuledWidth{1cm}
\SetAlgoLined
1) set $J=K-K^\top$ and $R=LL^\top$\;
2) solve state equation \eqref{eq:state} $\to x$\;
3) solve adjoint equation \eqref{eq:adjoint} $\to p$\;
4) evaluate the gradient \eqref{eq:nablaJ} and \eqref{eq:chain_rule}$\to \nabla\J(w)$\;
5) transform $\nabla\J(w)$ into a vector\;
5) compute the direction of descent via conjugate gradient method \cite{nocedalNumericalOptimization2006a} $\to s$\; 
6) find an admissible step size via Armijo rule \cite{hinzeOptimizationPDEConstraints2009} $\to \sigma$\;
7) update $v \mapsto v + \sigma s$\;
8) \textbf{if} stopping criterion is not fulfilled $\to$ go to 1)\; 
$\quad$\textbf{else} return optimal control $\bar{w}$\;
\end{algorithm}
Possible stopping criteria are $\|\nabla f(x)\| <\varepsilon$ for $0<\varepsilon \ll 1$ or when the maximal number of iterations is reached. 
Now, we will formally prove that the calculated gradient corresponds to the gradient, obtained by directly differentiating the reduced cost functional $\hat{\J}$. 
Note that the state equation \eqref{eq:state} is treated implicitly, and that $x$ and $y$ can be expressed in terms of $S(w)$ and $B^\top x$, respectively. To identify the gradient of the reduced cost functional, we first differentiate the equation $e(x,w)=e(S(w),w)=0$ with respect to $w$ in the direction $h \in W$ to obtain using the implicit function theorem
\begin{align*}
      &d_x e(S(w),w) S'(w)[h]+ d_w e(S(w),w)[h] = 0 \,
      \Rightarrow  \, S'(w)[h] = -d_x e(S(w),w)^{-1} d_w e(S(w),w))[h].
\end{align*}
This allows us to find the directional derivative of $\hat{\J}(w)=\J(x,w)$:
\begin{align*}
d\hat{\J}(w)[h] 
&= \langle d_y \J(y,w), B^\top S'(w)[h] \rangle_{H^{-1},H^1} + \langle d_w \J(y,w), h\rangle_{W^*,W} \\
&=\langle-  d_w e(S(w),w)^*d_x e(S(w),w)^{-*} [B d_y \J(y,w)], h  \rangle_{W^*,W} + \langle d_w \J(y,w), h\rangle_{W^*,W} \\
&= \langle d_w \J(y,w) -d_w e(S(w),w)^* d_x e(S(w),w)^{-*} [B\,d_y \J(y,w)], h \rangle_{W^*,W}.
\end{align*}
Since $W$ is a Hilbert space and $h$ was chosen arbitrarily, the Riesz 
theorem yields 
\begin{equation*}
   (\nabla \hat{\J}(w),h)_W = \langle d_w \J(y,w) -d_w e(S(w),w)^* d_x e(S(w),w)^{-*} [B\,d_y \J(y,w)], h \rangle_{W^*,W}.
\end{equation*}
The gradient is therefore given by
\begin{subequations}
    \begin{align}
        \nabla \J(w) &= d_w \J(y,w) -d_w e(S(w),w)^* \underbrace{d_x e(S(w),w)^{-*} [B\,d_y \J(y,w)]}_{=:p} \\
        &= d_w \J(y,w) -d_w e(S(w),w)^*p. \label{eq:grad}
    \end{align}
\end{subequations}
In this context,
    $d_x e(S(w),w)^* p = B\,d_y \J(y,w)$
describes the adjoint equation~\eqref{eq:adjoint} and the equation~\eqref{eq:grad} is exactly the gradient calculated above, i.e.\ the left-hand side of system~\eqref{eq:nablaJ}.

\section{Numerical Results}\label{sec:numerical}

For the numerical results we consider the interval $[0,T]$ with $T=1$ and 1001 discretization points, resulting in a step size of $h=0.0001$.
\begin{figure}[htbp]
    \centering
    \includegraphics[scale=0.45]{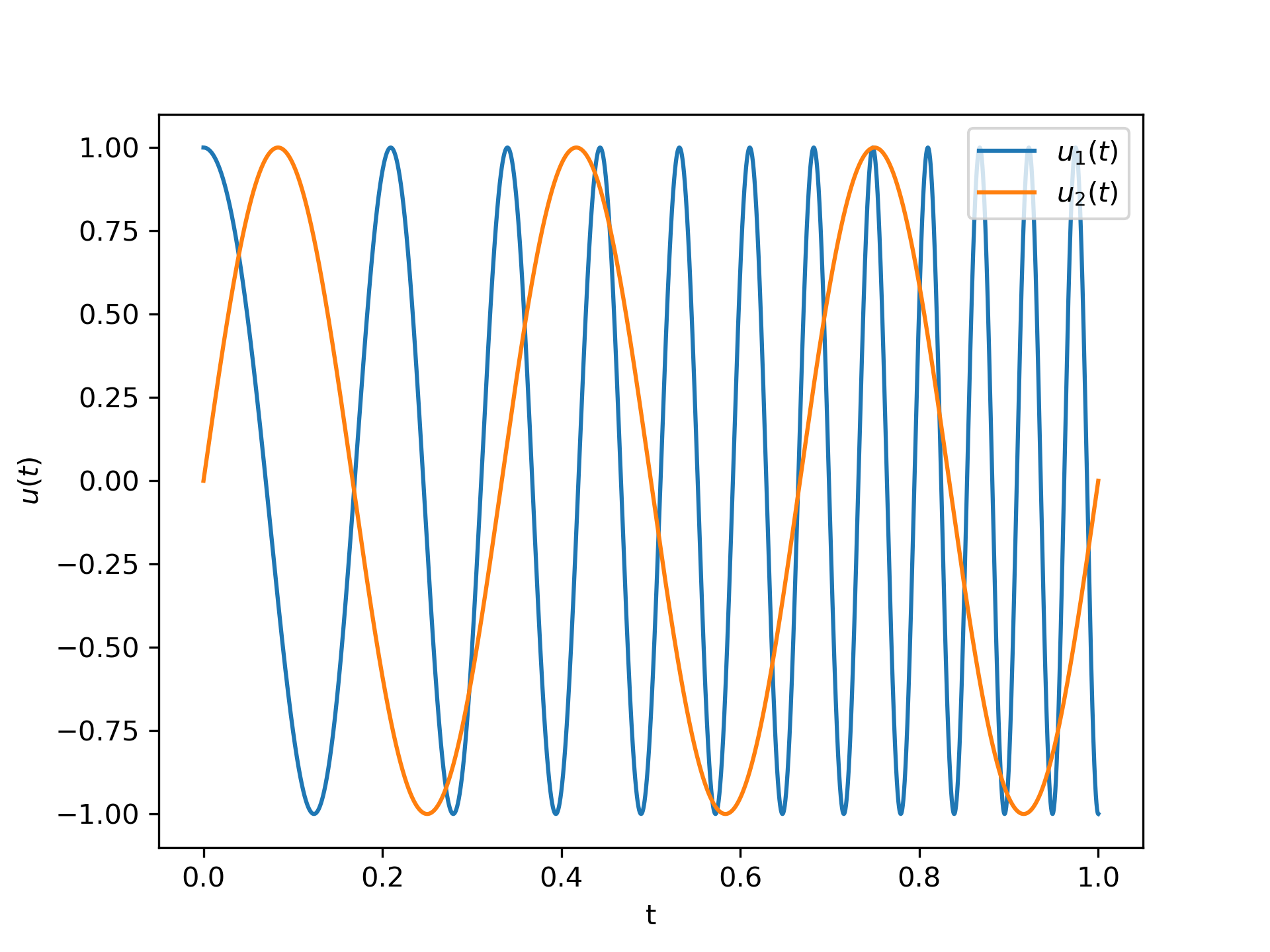}
    \includegraphics[scale=0.45]{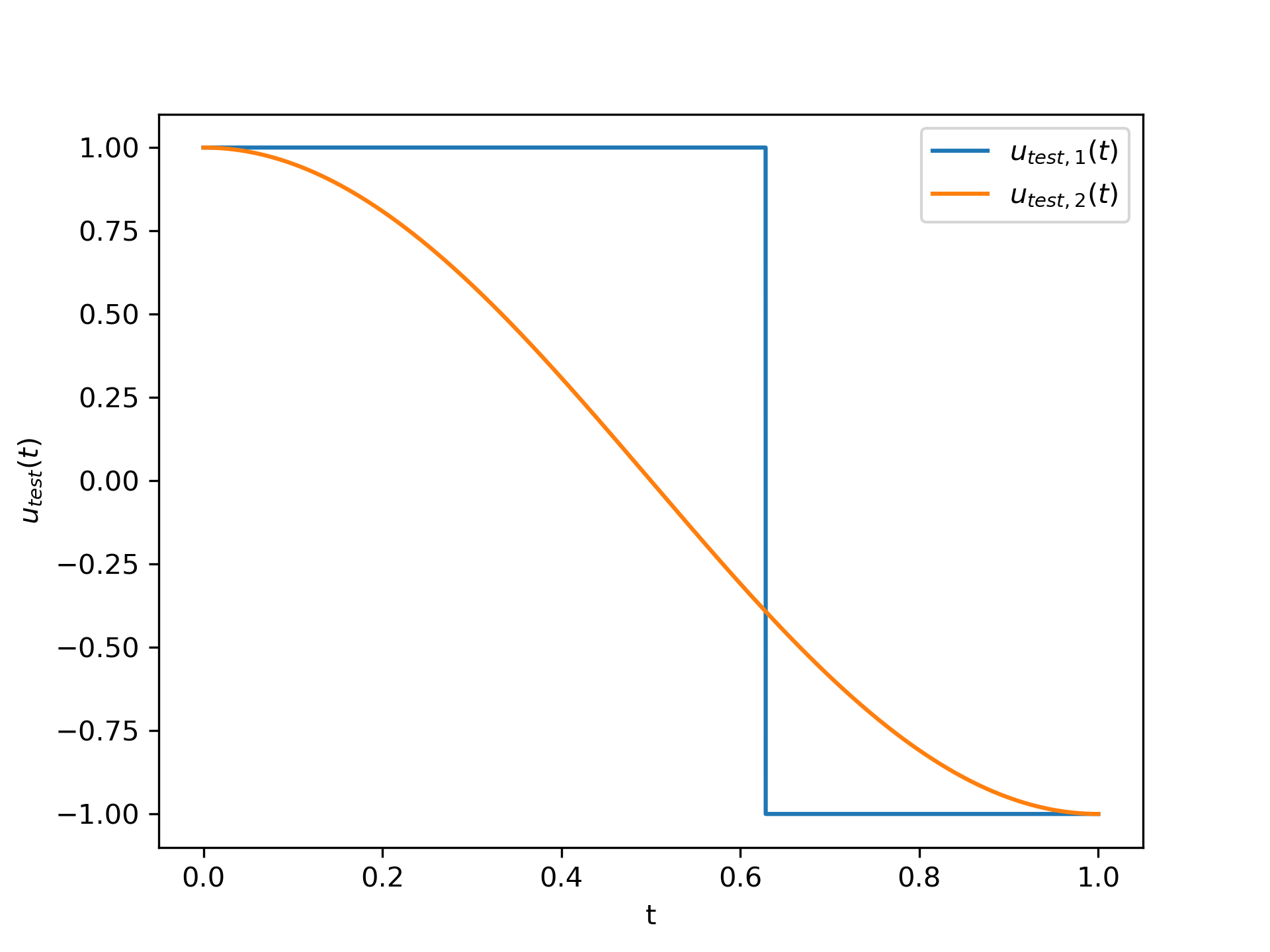}
    \caption{Signal $u$ for the calibration (left) and signal $u_\text{test}$ for cross-validation (right)}
    \label{fig:u_input}
\end{figure}
We set $m=2$ for all numerical test and used the same inputs as in \cite{guntherDatadrivenAdjointbasedCalibration2024} (see Figure~\ref{fig:u_input}) 
\begin{align*}
    u\colon [0,T]\to \R^2,\qquad u(t)=\begin{pmatrix}
        \operatorname{chirp}_{3,20}(t) \\ 
        \sin (2\pi f_0 t)
    \end{pmatrix},
\end{align*}
where $\operatorname{chirp}$ denotes the chirp-signal from \texttt{scipy.signal} toolbox with frequency $3$ at time $t=0$ and frequency $20$ at time $t=1$ and $f_0 = 3$.
To check the robustness of the identified system, we perform a cross-validation using the test signal (see Figure~\ref{fig:u_input})
\begin{equation*}
    u_\text{test} \colon [0,T] \to\R^2, \qquad u_\text{test}(t) = \begin{pmatrix} \operatorname{square}_{0.1}(t) \\ \cos(\pi t) \end{pmatrix},
\end{equation*}
where $\operatorname{square}_{0.1}$ is the square signal of the \texttt{scipy.signal} toolbox with duty parameter $0.1$.

\subsection{Synthetic data}
Below, we present the results for a synthetic dataset with $n=5$ and $m=2$. 
To this end, we select $w_{\rm data}$ and use it to compute $y_{\rm data}$. 
We initialize $B_{\rm data}$ with independent uniformly distributed entries in $[-0.2,0.2]$ and add $\diag(4,2)$ to the first $m\times m$ block.
For the matrix $R_{\rm data}$, we draw an $n\times n$ matrix $L$ with independent uniformly distributed entries in $[-1, 1]$.
Then, we set the strictly upper triangular part of $L$ as well as the elements $l_{11} =l_{22}$ to zero. Finally, we define $R_{\rm data}=L^\top L$, which is positive semidefinite. 
To generate a feasible initial $J$, we draw an $n\times n$ matrix with independent uniformly distributed entries in $[-1, 1]$. We take the upper part of this matrix, starting with the first off-diagonal, and fill the lower part of the matrix with the negative of the transpose of the upper part. 
Since we restrict ourselves to index-1 DAEs, we must check if $J$ and $R$ are admissible. 
The initial value of the differential component $x_{1,0,\rm data}$ is a random vector with entries in $[-0.1, 1]$.

First, the number of differential variables $r$ is $4$, while in the second case, it is $2$. 
This allows us to investigate how the number of differential variables affects the identification process. 
For all tests, we assume to 
have no information about the system, so, we set $\lambda = 0$. 


\begin{example}[System with four Differential Variables]
In this test, we generated an output $\yd=y_{\rm ref}$ with $r_{\rm ref}=4$.
To investigate the influence of the choice of $r$, we solved a minimization problem for each $r \in \{1,2,3,4,5\}$.
The same random initial values for the matrices and the first r components of the random vector $x_{1,0}$ were used for each minimization problem.
Figure~\ref{fig:ex1_cost} shows the evolution of the cost for all five test cases. 
As can be seen, the cost functional for the system with one differential variable is the highest; in the other cases, the costs have decreased significantly. 
Table~\ref{tab:ex1_cost} lists the costs as well as the maximum norm error $\|y - y_{\rm ref}\|_\infty$ for the first and last iterations. 
We observe that the costs and the maximum norm error are lowest for $r=3$. 
For all test cases except $r=1$, however, the reduction in the cost functional is above 99\,\%. 
Figures~\ref{fig:ex1_start} and \ref{fig:ex1_output} show the two system outputs before and after the calibration together with the reference output (dashed yellow line) for $r=1,\dots,5$. 
As the error values already indicated, the fitted outputs match the reference output very well, except for the system with only one differential variable.

\begin{figure}[ht!]
    \centering
    \includegraphics[width=0.8\textwidth]{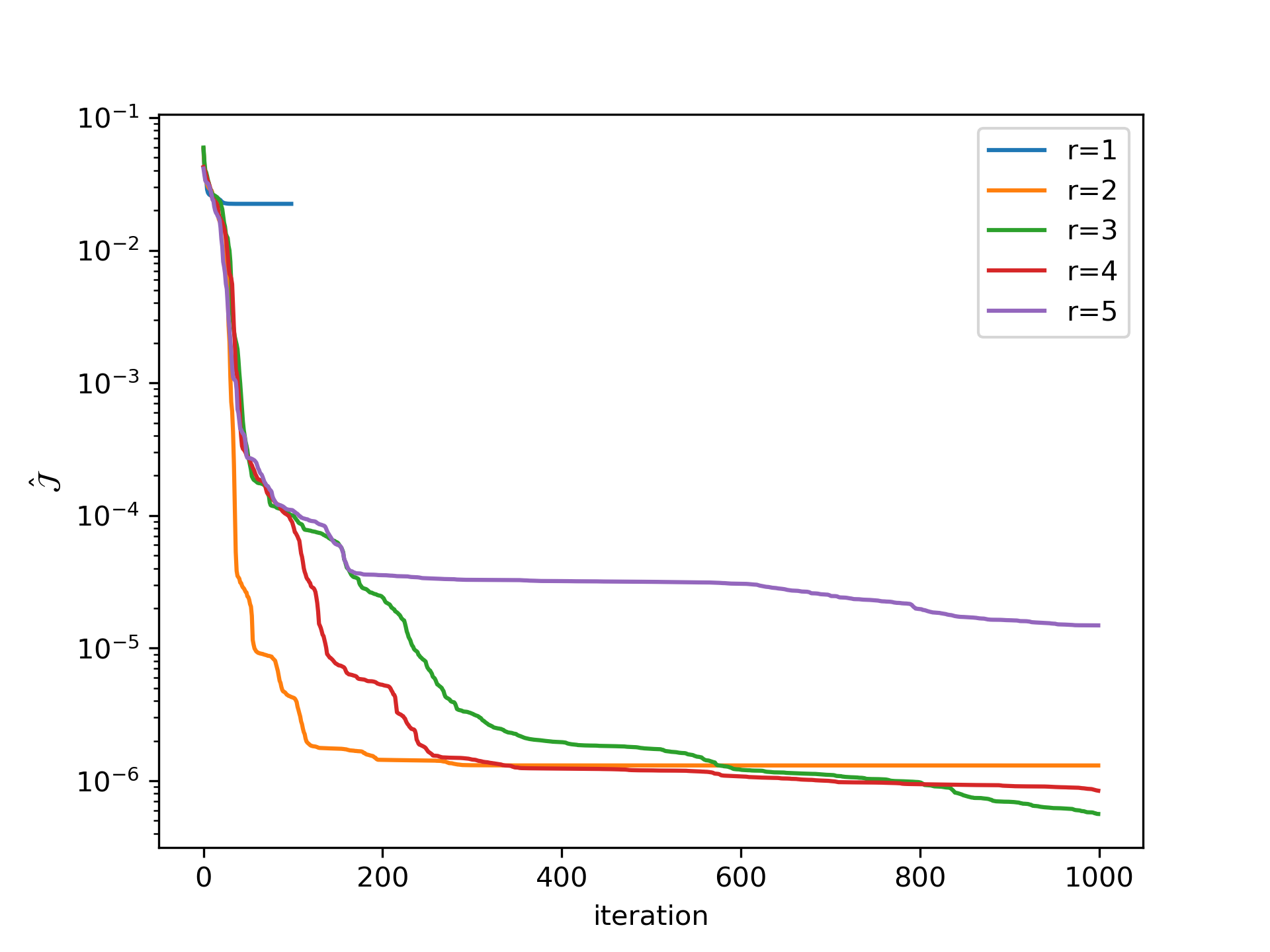}
    \caption{Evolution of the cost for various $r$ over the iterations of the calibration in log plot for $r_{\rm ref}=4$.}
    \label{fig:ex1_cost}
\end{figure}

\begin{table}
    \centering
    \begin{tabular}{c|c|c|c|c|c}
        number of differential variables $r$ & 1 & 2 & 3 & 4 & 5\\ \hline
        cost in first iteration &\num{5.75e-02} &\num{5.83e-02} &\num{5.95e-02} &\num{4.25e-02} &\num{4.14e-02} \\
        cost in last iteration &\num{2.24e-02} &\num{1.30e-06} &\num{5.62e-07} &\num{8.42e-07} &\num{1.49e-05}  \\
        $\|y - y_{\rm ref}\|_\infty$ in first iteration &\num{7.48e-01} &\num{7.70e-01} &\num{7.12e-01} &\num{7.86e-01} &\num{7.81e-01}  \\
        $\|y - y_{\rm ref}\|_\infty$ in last iteration &\num{4.88e-01} &\num{2.95e-03} &\num{2.41e-03} &\num{3.45e-03} &\num{9.84e-03} \\
    \end{tabular}
    \caption{Initial and final cost values, the $L^2$- and the maximum norm between the fitted output and reference output for various numbers of differential variables $r$ for $r_{\rm ref}=4$.}
    \label{tab:ex1_cost}
\end{table}

\begin{figure}[ht!]
    \centering
    \includegraphics[width=0.45\textwidth]{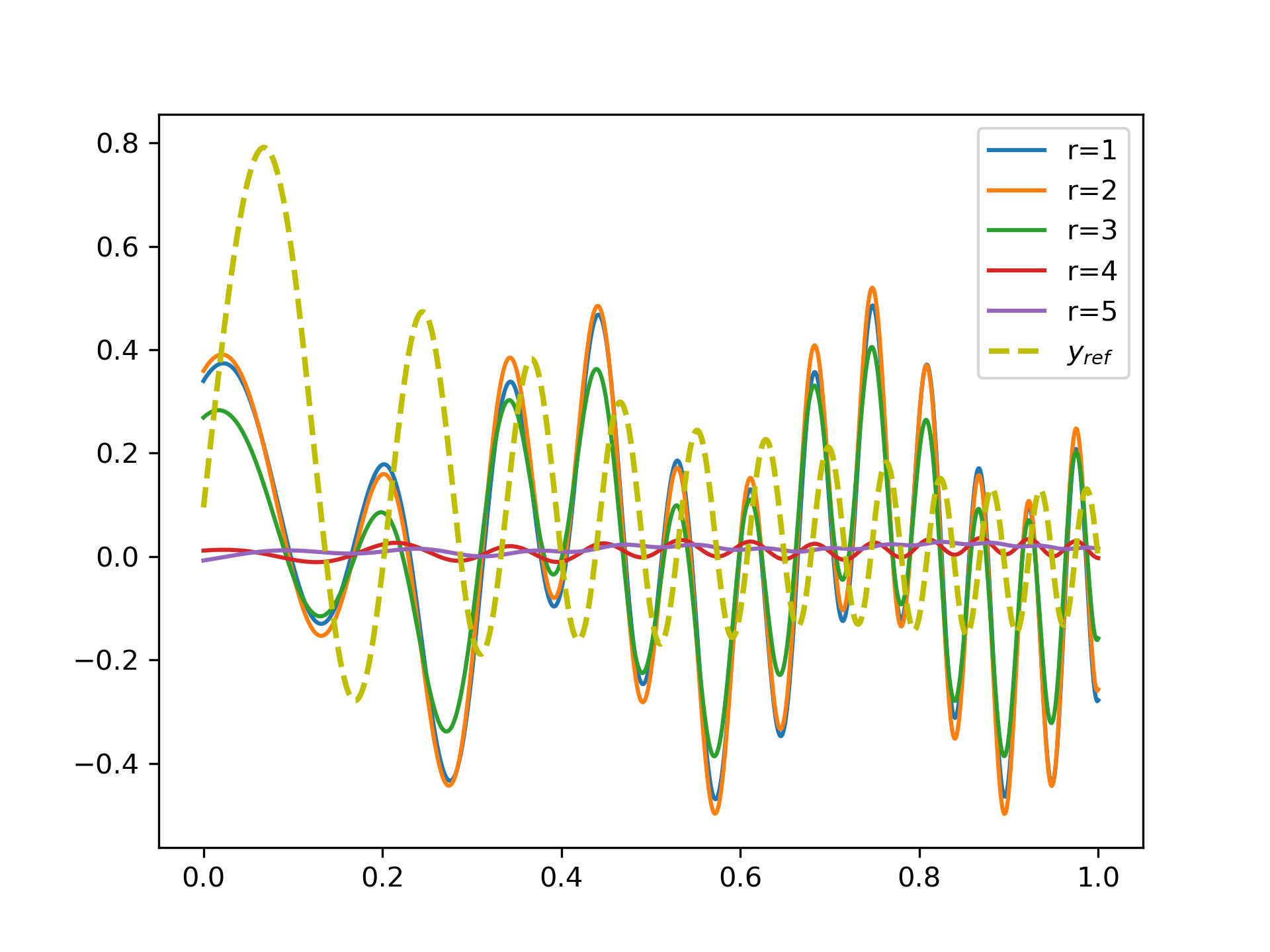}
    \includegraphics[width=0.45\textwidth]{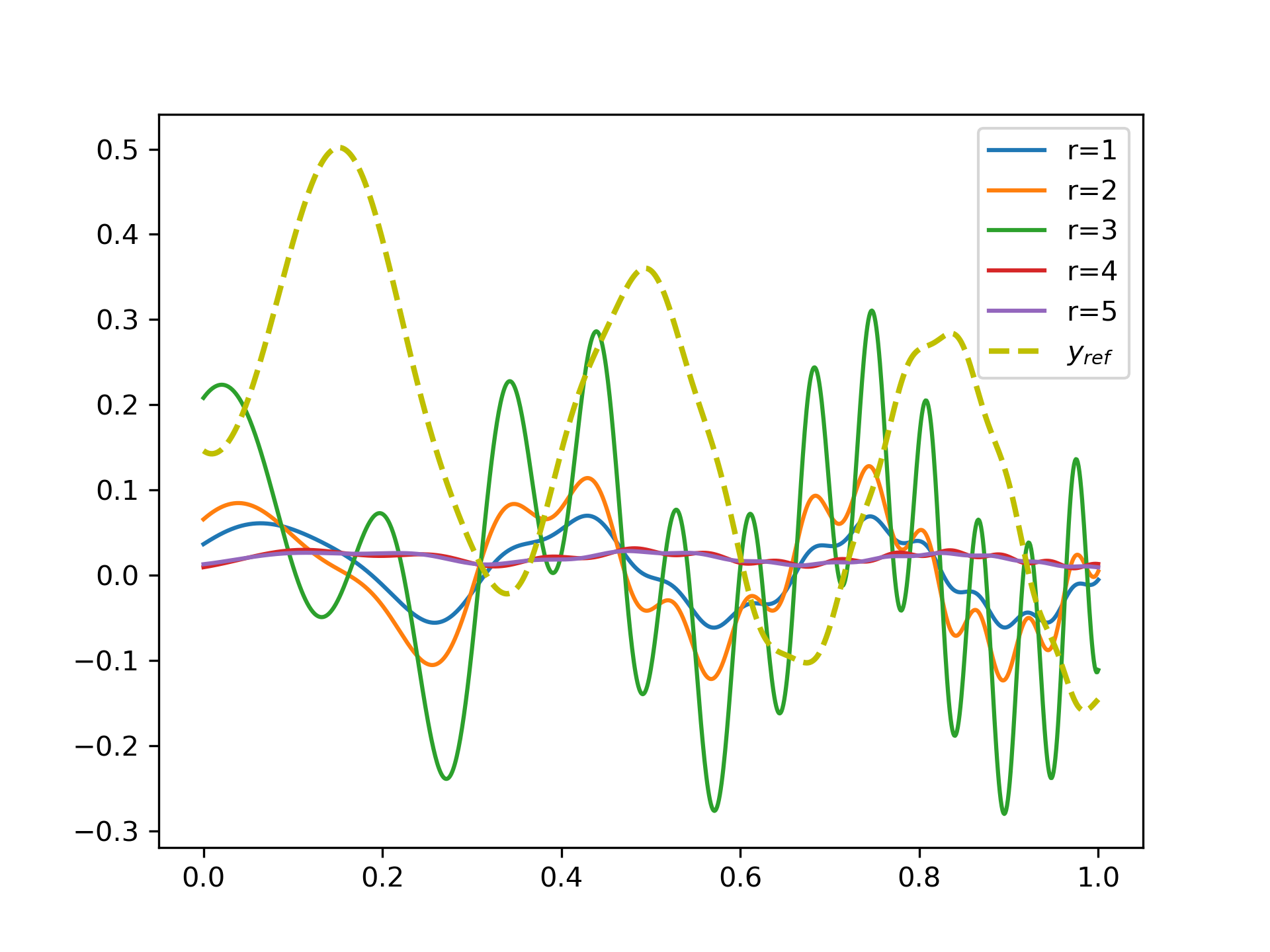}
    \caption{Left: First component of the output before the calibration for various $r$ and for the reference system $y_{\rm ref}$;  Right: Second component of the output before the calibration for various $r$ and for the reference system $y_{\rm ref}$ for $r_{\rm ref}=4$.}
    \label{fig:ex1_start}
\end{figure}

\begin{figure}[ht!]
    \centering
    \includegraphics[width=0.45\textwidth]{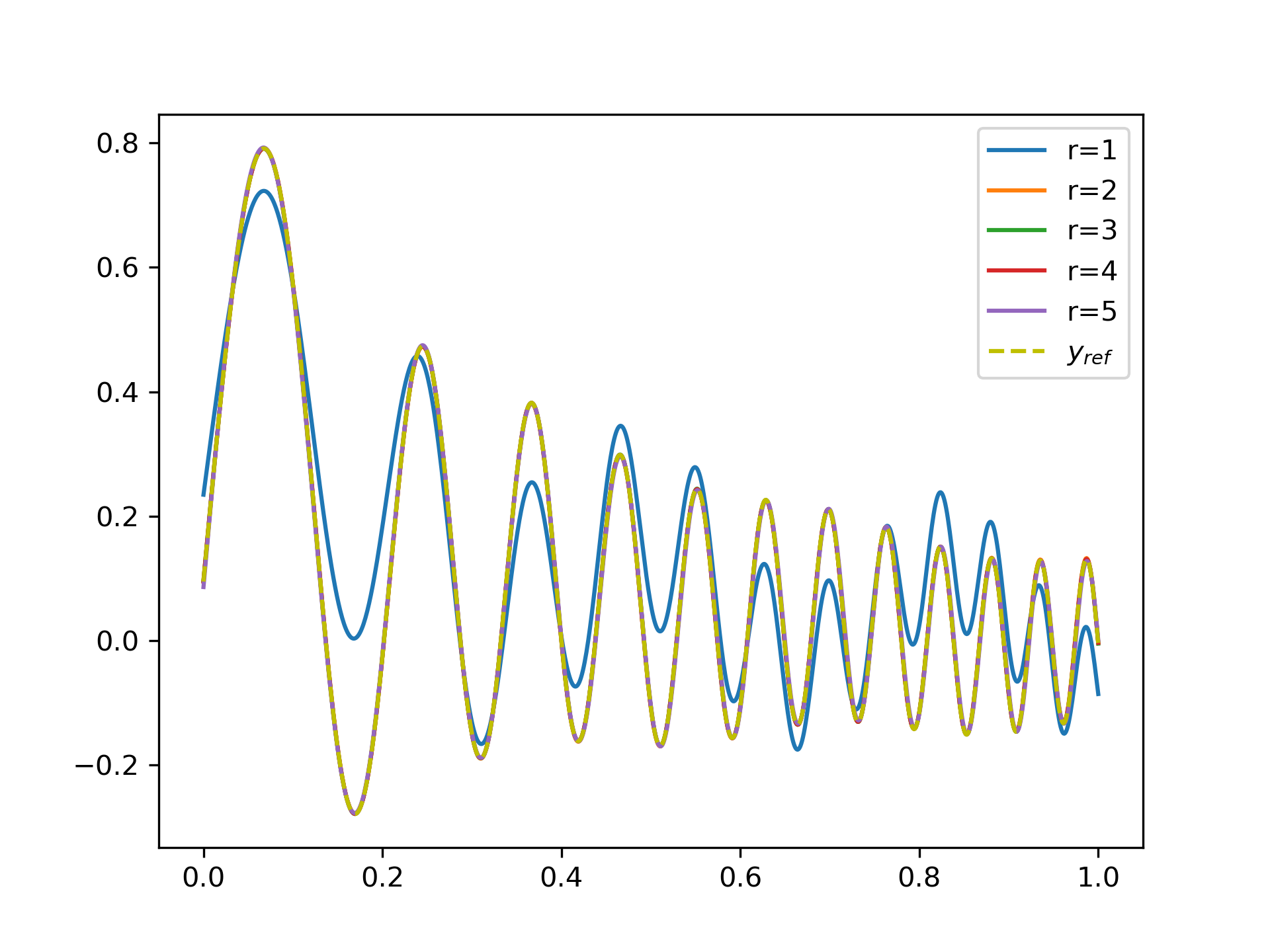}
    \includegraphics[width=0.45\textwidth]{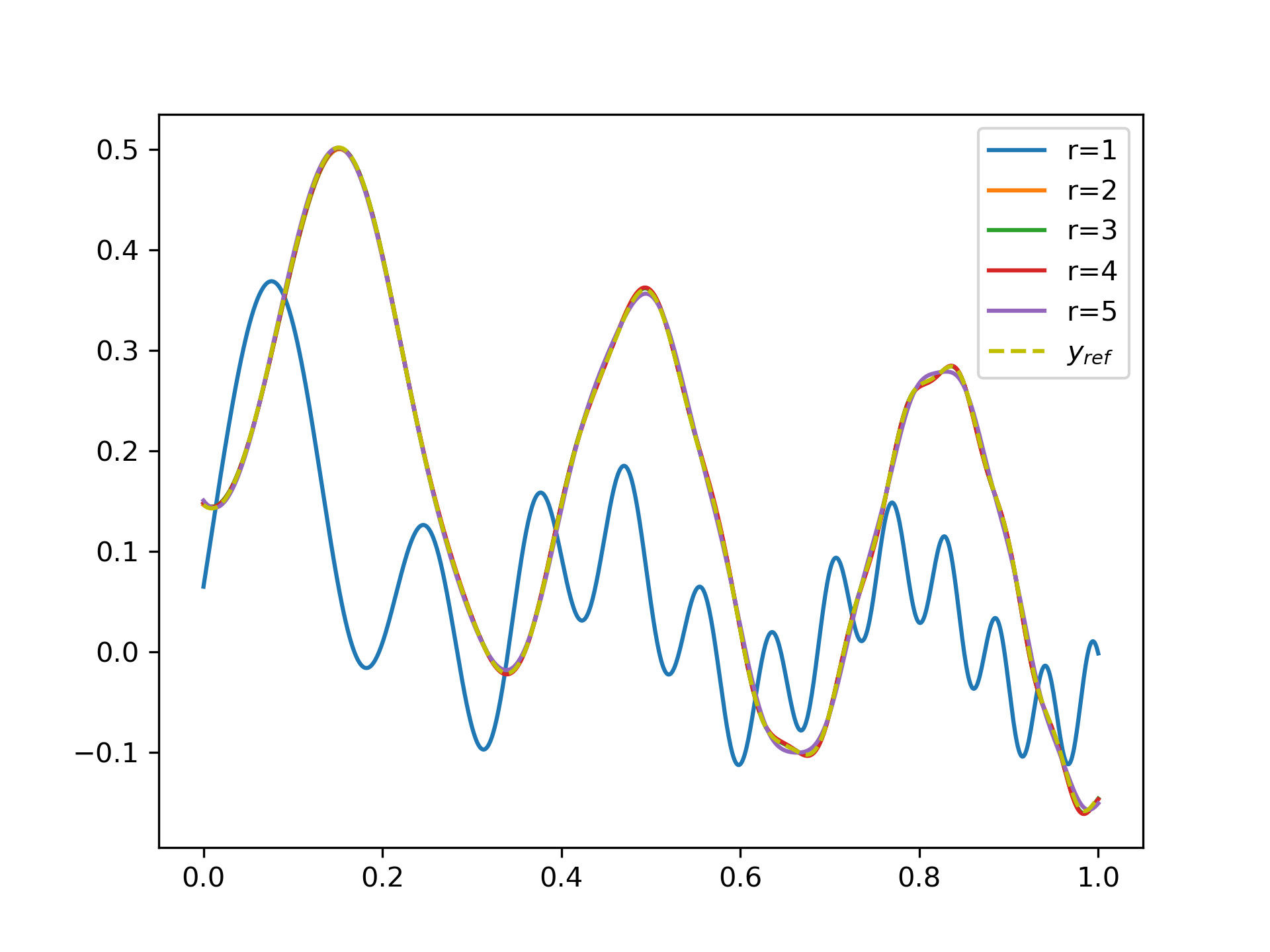}
    \caption{Left: First component of the output after the calibration for various $r$ and for the reference system $y_{\rm ref}$;
    Right: Second component of the output after the calibration for various $r$ and for the reference system $y_{\rm ref}$ for $r_{\rm ref}=4$.}
    \label{fig:ex1_output}
\end{figure}

Note that a single signal is insufficient for determining how well the fitted system represents the true system.
Therefore, we conclude this example with a cross-validation test. 
We apply the input signal $u_\text{test}$ to the fitted and reference systems, and we compare the corresponding outputs.
Table~\ref{tab:ex1_cross} shows the $L^2$-difference between $y$ and $y_\text{test}$ as well as the maximum norm. 
As we can see, the ODE system ($r=5$) has the largest error in the cross-validation test, while the error for $r=4$ is the smallest and closest to the actual value.
Figure~\ref{fig:ex1_cross} shows the results of this cross-validation test graphically and the outputs for the fitted systems $r=2,3$ also match well.

\begin{table}[H]
    \centering
    \begin{tabular}{c|c|c|c|c|c}
        number of differential variables $r$ & 1 & 2 & 3 & 4 & 5\\ \hline
        $\|y - y_\text{test}\|_{L^2}$ &\num{3.96e+00} &\num{3.29e-01} &\num{4.42e-01} &\num{2.62e-01} &\num{5.11e+00}  \\
        $\|y - y_\text{test}\|_\infty$  &\num{5.73e+00} &\num{6.25e-01} &\num{6.26e-01} &\num{4.60e-01} &\num{8.67e+00}  \\
    \end{tabular}
    \caption{The $L^2$-difference and the maximum norm between the fitted output and reference output for various $r$ in the cross-validation test and $r_{\rm ref}=4$.}
    \label{tab:ex1_cross}
\end{table}

\begin{figure}[H]
    \centering
    \includegraphics[width=0.45\textwidth]{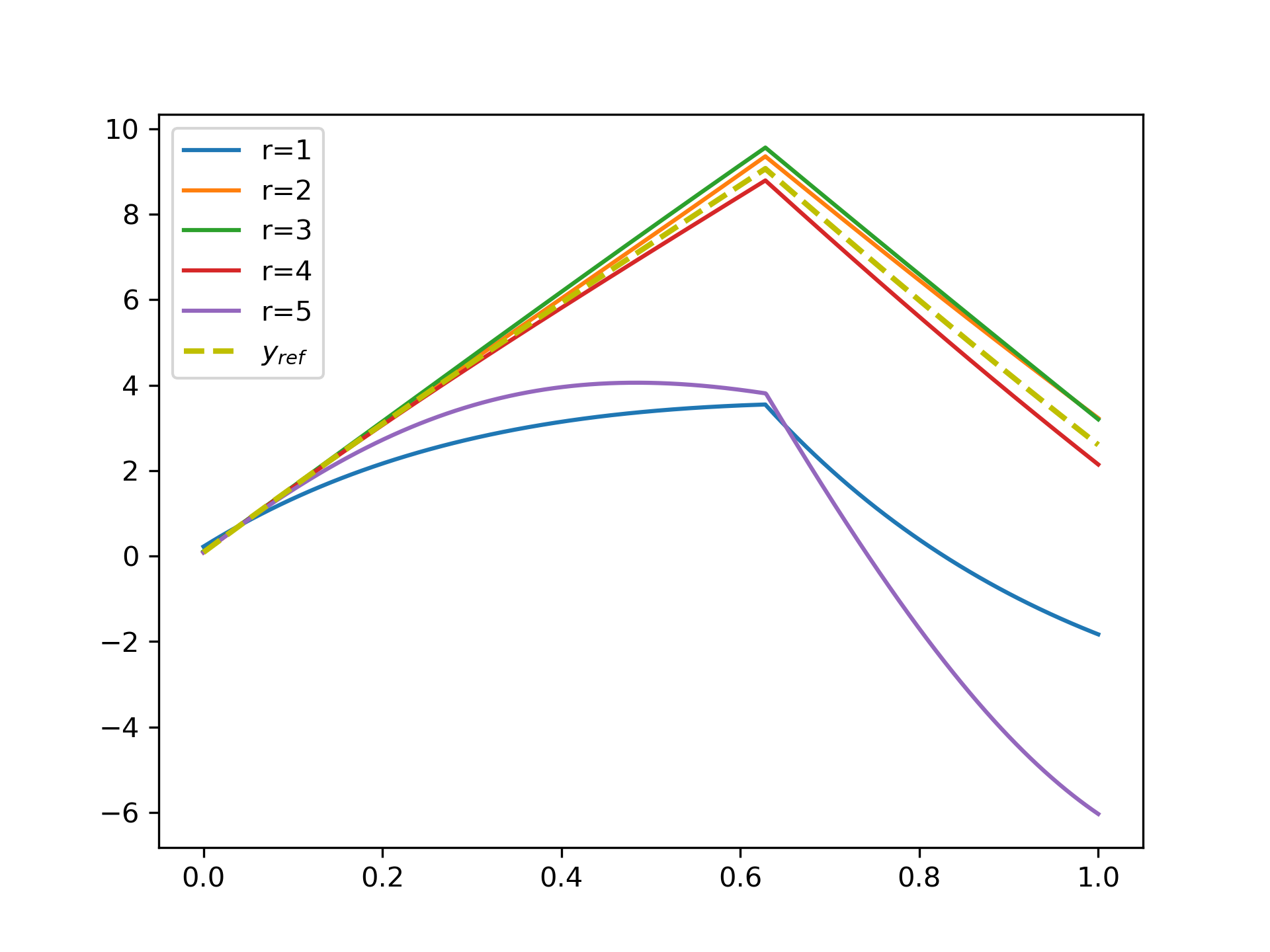}
    \includegraphics[width=0.45\textwidth]{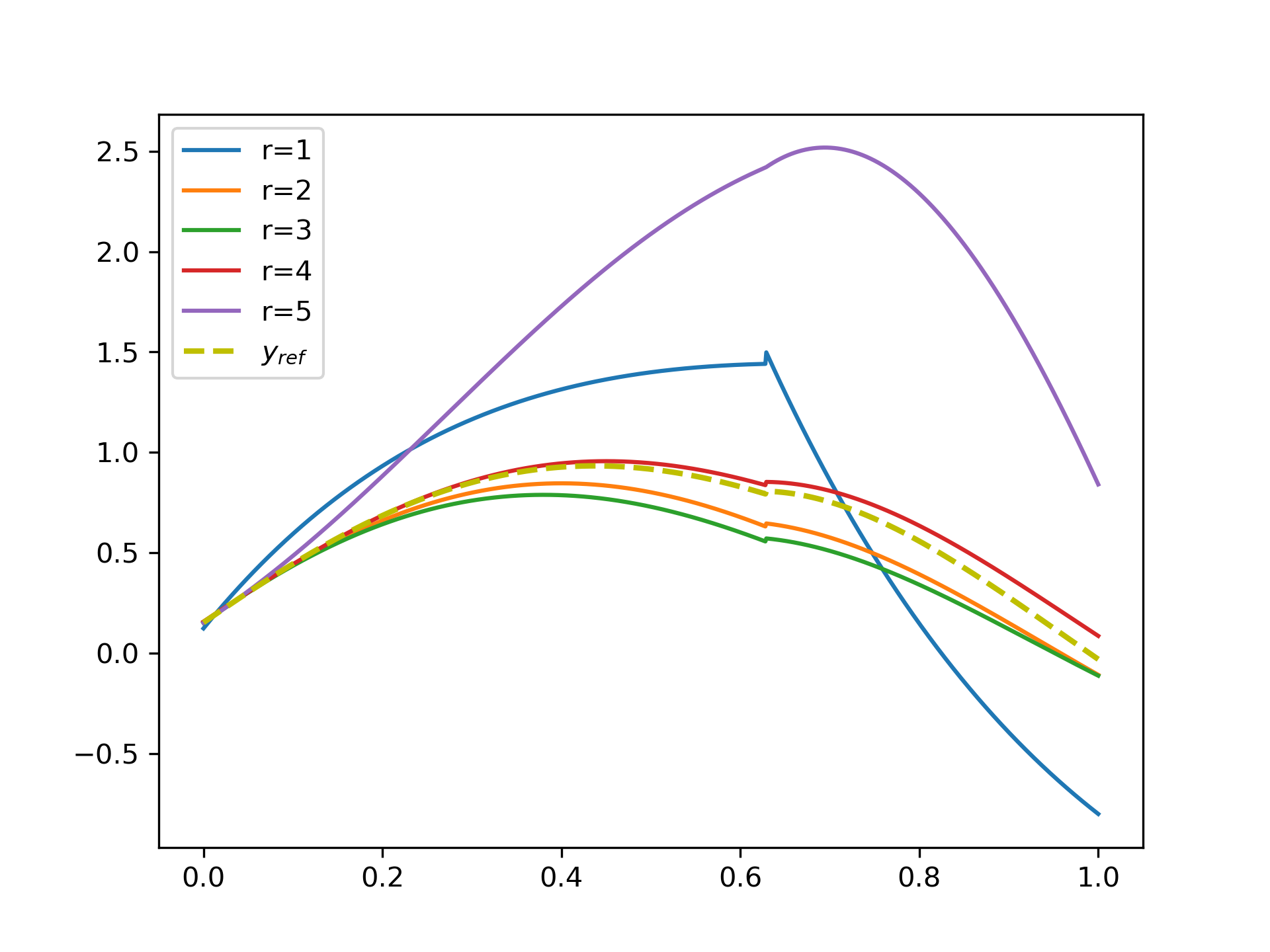}
    \caption{Left: First component of the output obtained in the cross-validation for various $r$. Right: Second component of the output obtained in the cross-validation for various $r$ for $r_{\rm ref}=4$.}
    \label{fig:ex1_cross}
\end{figure}
\end{example}


\begin{example}[System with two Differential Variables]
We used the same setup as in the first test.
This time, 
we generated the $\yd$ output using a pH-DAE system with $r_\text{data}=2$ differential variables. 
As before, we solved the optimization problem for different values of $r$ and compared the corresponding results. 
Figure~\ref{fig:ex2_cost} shows the evolution of the cost and Table~\ref{tab:cost_ex2} shows the costs and errors for the first and last iterations.
As before, we observe that the cost for the system with $r=1$ decreases only marginally.
In contrast, the cost reduction is again greater than 99\,\% for all other systems. 
Figure~\ref{fig:ex2-output} shows the output obtained with the initial matrices before calibration in blue. The orange graph shows the output of the fitted system and the green dashed line shows the output obtained with the reference data.
As the error values above indicated, the fitted output closely matches the reference output.

\begin{figure}[ht!]
    \centering
    \includegraphics[width=0.8\textwidth]{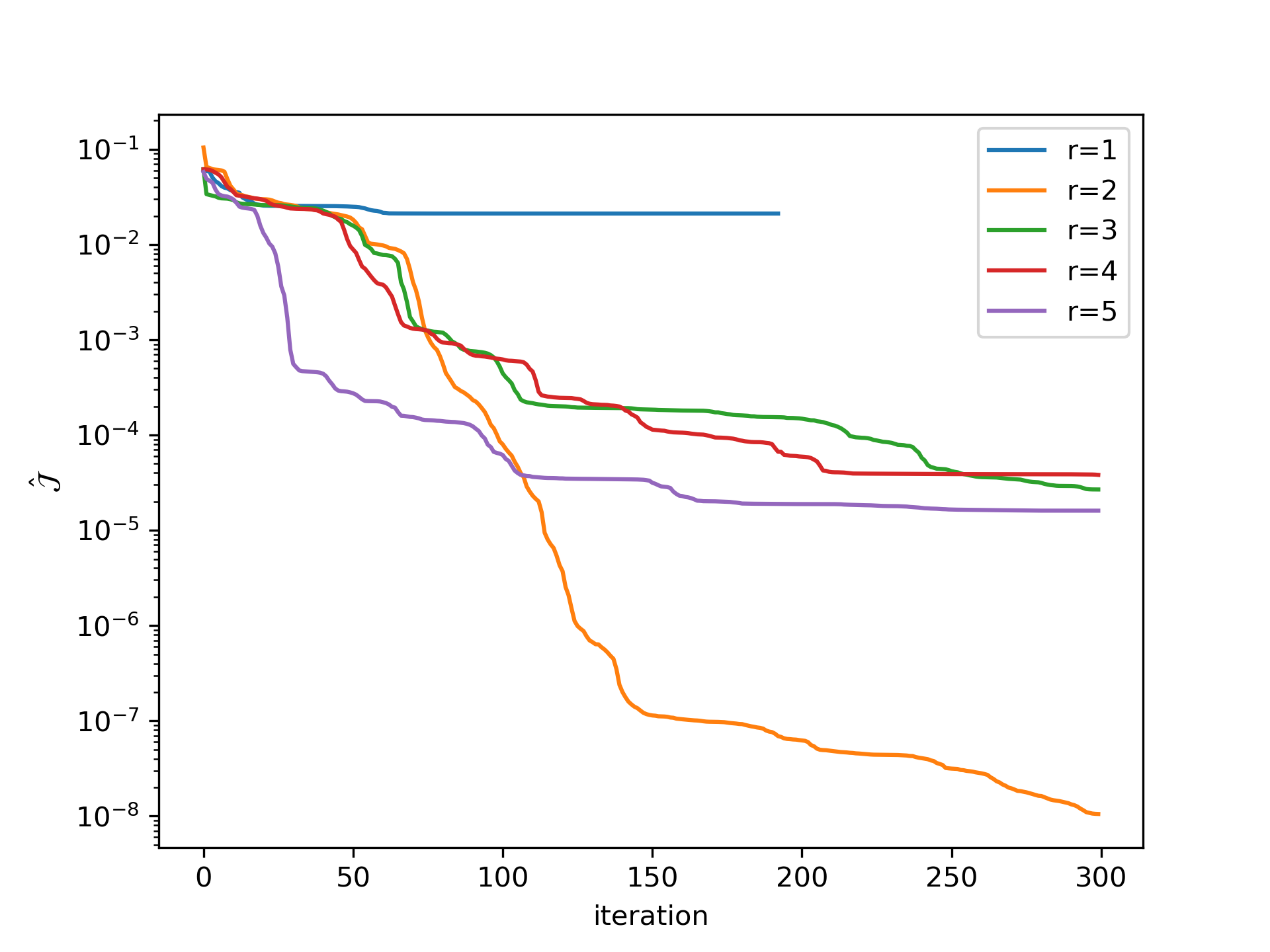}
    \caption{Evolution of the cost for various $r$ over the iterations of the calibration in log plot for $r_{\rm ref}=2$}
    \label{fig:ex2_cost}
\end{figure}

Table~\ref{tab:ex2_cross} and Figure~\ref{fig:ex2_cross} show the results for the cross-validation test. 
The fitted system with $r=2$ provides the best results, yielding the lowest cost and the smallest errors.

In summary, we successfully identified pH-DAE systems using our approach.
However, the results depend heavily on the choice of the hyperparameter~$r$, at least in the cross-validation test.
We note that our problem does not have a unique solution and can therefore depend heavily on the initial data $v^0$.
In other words,  two different surrogate systems that minimize the cost functional may respond differently to the test signal. 

\begin{table}[h]
    \centering
    \begin{tabular}{c|c|c|c|c|c}
        number of differential variables $r$ & 1 & 2 & 3 & 4 & 5\\ \hline
        cost in first iteration &\num{5.96e-02} &\num{1.04e-01} &\num{6.18e-02} &\num{6.17e-02} &\num{5.78e-02} \\
        cost in last iteration &\num{2.12e-02} &\num{1.05e-08} &\num{2.68e-05} &\num{3.82e-05} &\num{1.61e-05}  \\
        $\|y - y_{\rm ref}\|_\infty$ in first iteration &\num{7.84e-01} &\num{9.70e-01} &\num{8.74e-01} &\num{7.94e-01} &\num{7.96e-01}\\
        $\|y - y_{\rm ref}\|_\infty$ in last iteration &\num{4.81e-01} &\num{2.41e-04} &\num{1.20e-02} &\num{1.59e-02} &\num{9.43e-03}  \\
    \end{tabular}
    \caption{Initial and final cost values and the maximum norm between the fitted output and reference output for various numbers of differential variables $r$ for $r_{\rm ref}=2$.}
    \label{tab:cost_ex2}
\end{table}

\begin{table}[h]
    \centering
    \begin{tabular}{c|c|c|c|c|c}
        number of differential variables $r$ & 1 & 2 & 3 & 4 & 5\\ \hline
        $\|y - y_\text{test}\|_L^2$ &\num{3.96e+00} &\num{3.68e-02} &\num{7.25e-01} &\num{2.84e-01} &\num{1.25e+00}  \\
        $\|y - y_\text{test}\|_\infty$  &\num{5.75e+00} &\num{6.71e-02} &\num{1.06e+00} &\num{3.80e-01} &\num{1.85e+00}   \\
    \end{tabular}
    \caption{Cross validation cost values and the maximum norm between the fitted output and reference output for various numbers of differential variables $r$ for $r_{\rm ref}=2$.}
    \label{tab:ex2_cross}
\end{table}

\begin{figure}[ht!]
    \centering
    \includegraphics[width=0.45\textwidth]{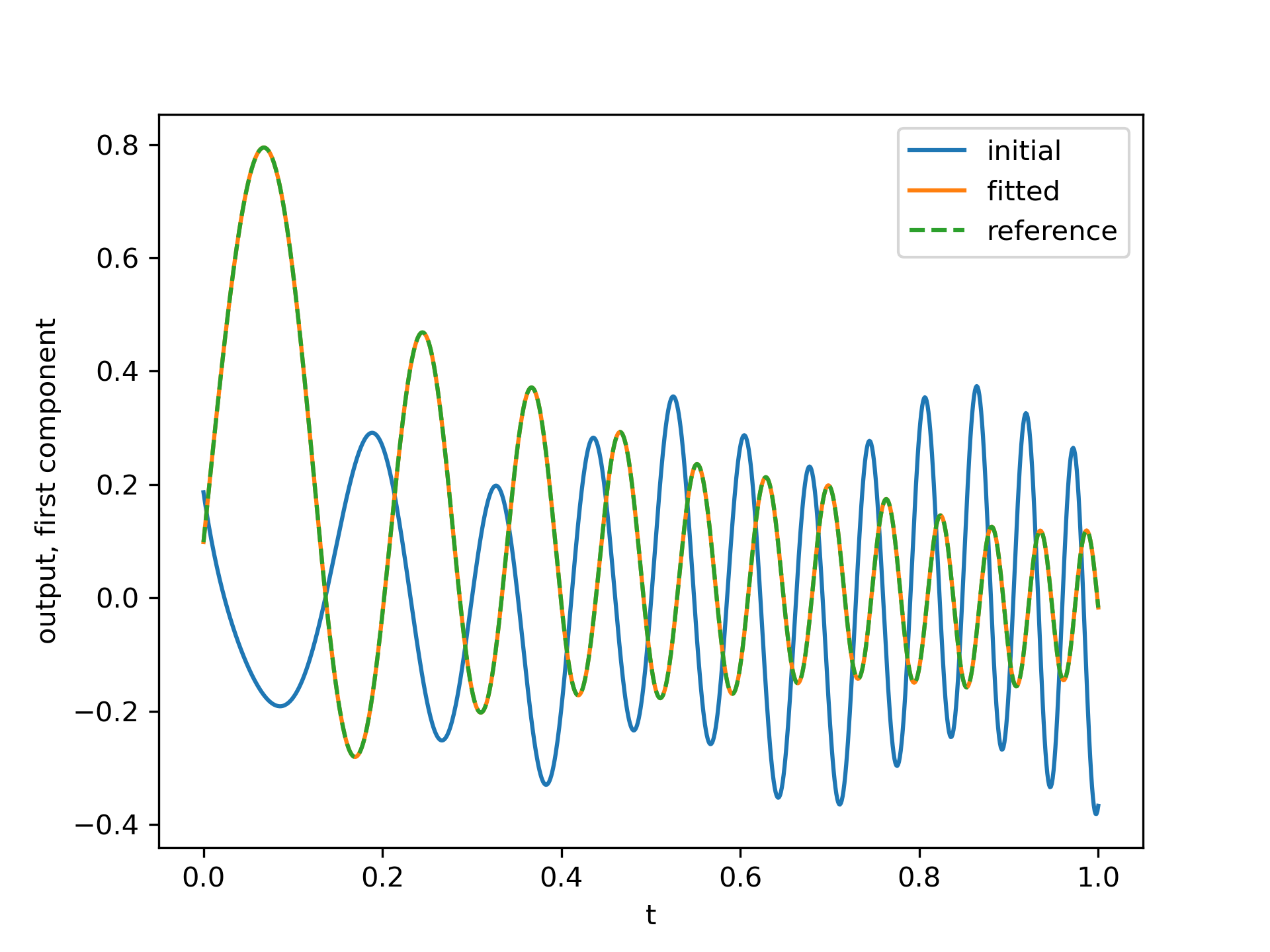}
    \includegraphics[width=0.45\textwidth]{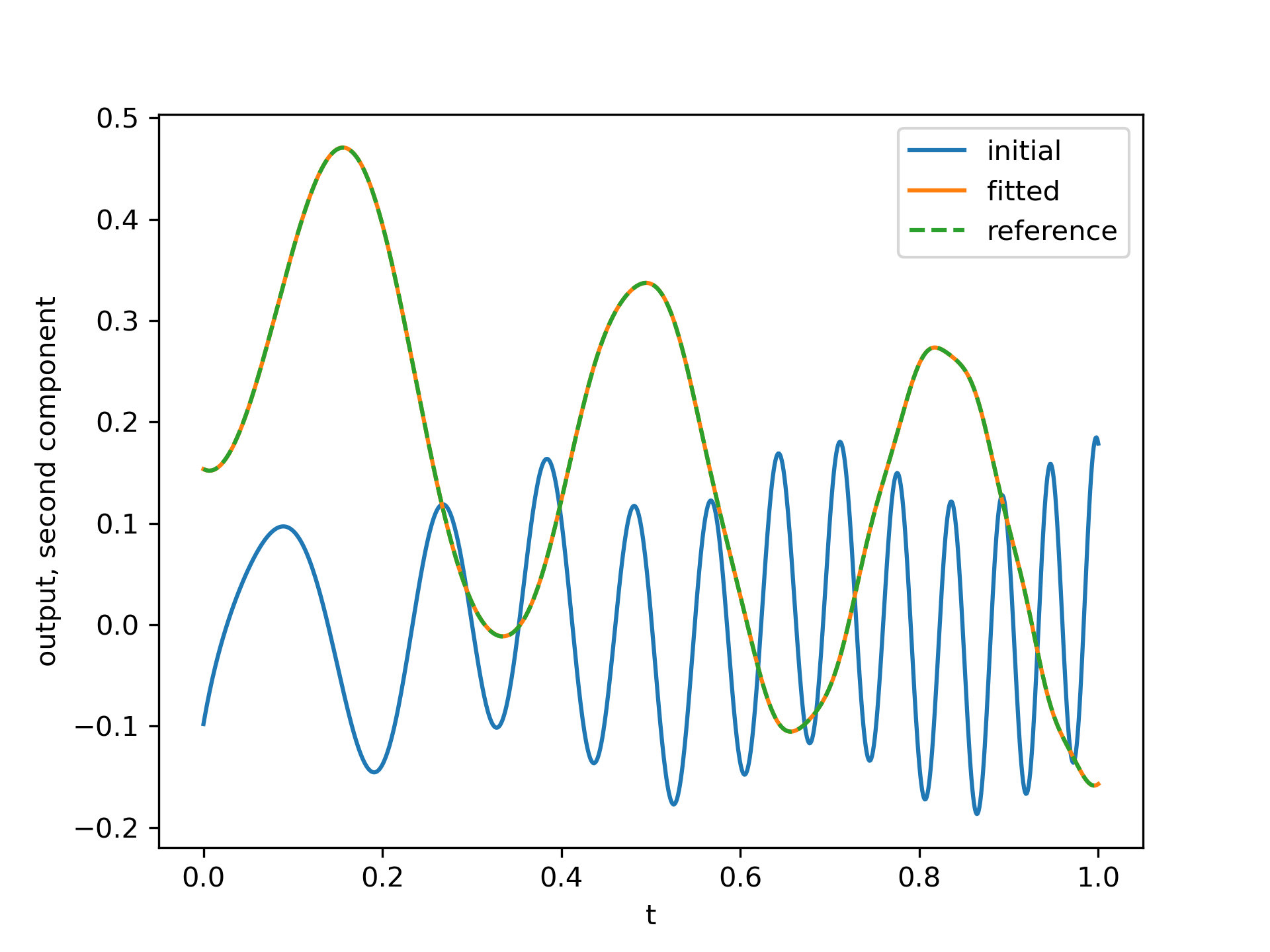}
    \caption{Left: First component of the output for the before the calibration (initial), after the calibration (fitted) and for the reference matrices (reference) used for the generation of the synthetic data. 
    Right: Second component of the output for the before the calibration (initial), after the calibration (fitted) and for the reference matrices (reference) used for the generation of the synthetic data for $r_{\rm ref}=2$.}
    \label{fig:ex2-output}
\end{figure}

\begin{figure}[ht!]
    \centering
    \includegraphics[width=0.45\textwidth]{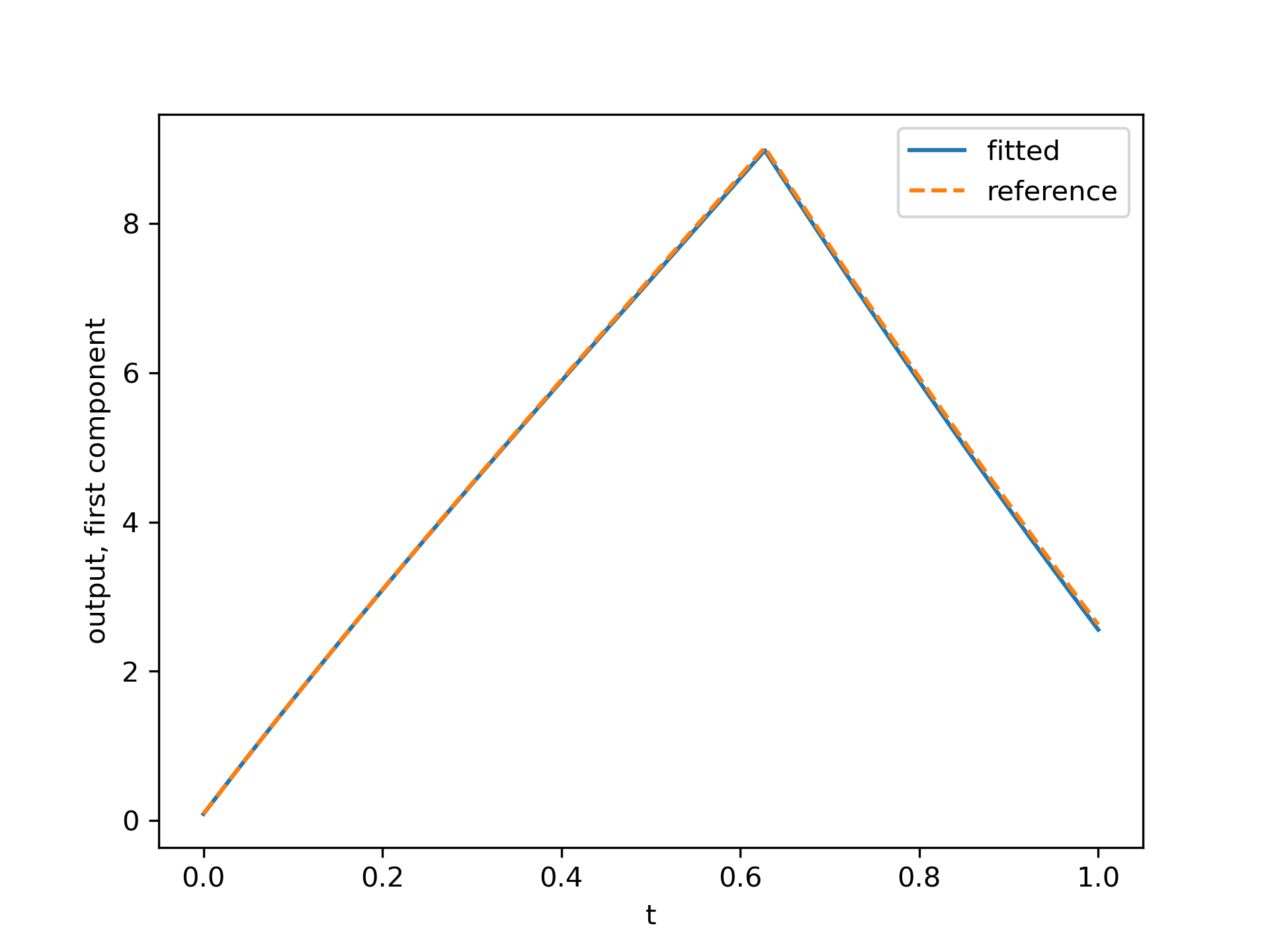}
    \includegraphics[width=0.45\textwidth]{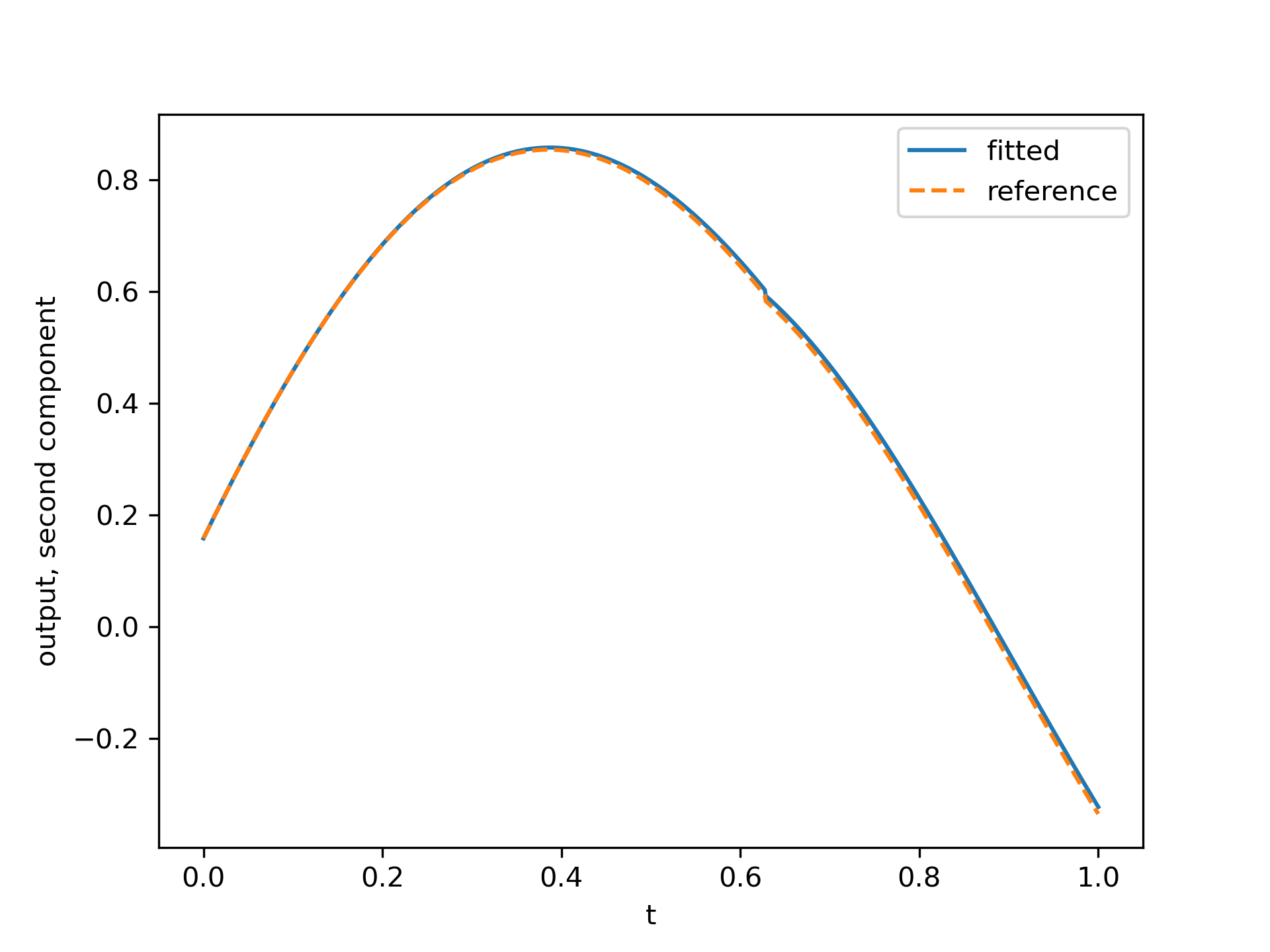}
    \caption{Left: First component of the output obtained in the cross-validation. Right: Second component of the output obtained in the cross-validation for $r_{\rm ref}=2$.}
    \label{fig:ex2_cross}
\end{figure}
\end{example}

\subsection{Model order reduction test with a RCL Ladder Network} 
The last test case we consider is taken from the PHS benchmark collection \cite{schwerdtnerAlgopaulPortHamiltonianBenchmarkSystemsjl2026} and generates pH-DAE models for electrical circuits consisting of ideal voltage sources, resistors, inductors and capacitors. Modeling of such circuits with directed graphs 
(see Figure~\ref{fig:rcl-network}) leads to systems of the following form:
\begin{align*}
    E \dot{x}(t) &= (J-R) x(t) + Bu(t),\\
    y(t) &= B^\top x(t),
\end{align*}
where 
\begin{align*}
    E=\begin{pmatrix}
        \mathcal{A}_c C \mathcal{A}_c^T & 0 & 0\\
        0 & L & 0 \\
        0 & 0 & 0
    \end{pmatrix},
    J=\begin{pmatrix}
        0 & -\mathcal{A}_l & -\mathcal{A}_v\\
        \mathcal{A}_l^\top & L & 0 \\
        \mathcal{A}_v^\top & 0 & 0
    \end{pmatrix},
    E=\begin{pmatrix}
        \mathcal{A}_r R^{-1} \mathcal{A}_r^T & 0 & 0\\
        0 & 0 & 0 \\
        0 & 0 & 0
    \end{pmatrix},
    B=\begin{pmatrix}
        0 \\
        0 \\
        -I_2
    \end{pmatrix}.
\end{align*}
Here, the matrices $R$, $L$, and $C$ are positive definite diagonal matrices containing the resistances, inductances and capacitances as entries.
The incidence matrices $\mathcal{A}_r$, $\mathcal{A}_l$, $\mathcal{A}_c$, and $\mathcal{A}_v$ follow directly from the directed graph of the network and contain only entries in $\{-1,0,1\}$. 
The system's inputs $u(t)$ are the voltages $v_v(t)$ provided by the voltage sources and the outputs are the negative currents $i_v(t)$ through the voltage sources.
The state vector is 
$x(t)\coloneq \begin{psmallmatrix}v(t)^\top & i_l(t)^\top & i_v(t)^\top \end{psmallmatrix}^\top$ 
with node voltages $v(t)$ and inductor currents $i_l(t)$.
Thus, the model dimension is 
$n=3\bar{n}+4$, where $\bar{n}$ denotes the number of loops. 
Here, we choose the value $\bar{n}=32$
and set all resistances to $0.2$, and capacitances and inductances to $1$. 

\begin{figure}[ht!]
    \centering
    \begin{circuitikz}[european resistors, european currents]
        \draw (0,0) to[sV, l=$u_1(t)$] (0,3) 
                    to[short, i=$y_1(t)$] (2,3);
        \draw (2,3) to[R=$R_0$] (2,0);
        \draw (2,3) to[R=$R_1$] (4.5,3)
                    to[L=$L_1$] (6.5,3)
                    to[C, l_=$C_1$] (6.5,0)
                    -- (0,0);   
        \draw(6.5,3) -- (6.8,3);        
        \draw(6.5,0) -- (6.8,0); 
        \draw[loosely dotted, thick] (6.8,3) -- (7.5,3);
        \draw[loosely dotted, thick] (6.8,0) -- (7.5,0);
        \draw (7.5,3) -- (8,3)
            to[C, l=$C_{\bar{n}-1}$] (8,0)
            -- (7.5,0);
        \draw (8,3) to[R=$R_{\bar{n}}$] (10.5,3)
                    to[L=$L_{\bar{n}}$] (12.5,3)
                    to[R, l_=$R_{\bar{n}+1}$] (12.5,0)
                    to[short, i=$y_2(t)$] (10.5,0)
                    to[sV,sources/symbol/rotate=auto, l=$u_2(t)$] (8,0);

    \end{circuitikz}
    \caption{A schematic of the considered RCL Ladder Network, cf.\  \cite{schwerdtnerAlgopaulPortHamiltonianBenchmarkSystemsjl2026}.}
    \label{fig:rcl-network}
\end{figure}

Due to the simple choice of $L$ and $C$, the system is already in semi-explicit form \eqref{eq:pH-DAE_transformed}.  
Hence, the reference data $y_\text{data}$ can be computed directly using the implicit midpoint rule, and the number of differential variables is given by $r=\operatorname{rank} E$.
We took a $2n^2+mn+r \times 1$ vector with entries in $[-1,1]$ as the initial value $v^0$ and a $r\times 1$ vector with entries in $[0,1]$ as the initial value $x_{1,0}$. 
Our objective was to investigate whether a reduced system exhibiting the same behavior as the original system could be found. 
In doing so, we 
examined 
various ratios $r/n$. 
Figure~\ref{fig:ex3_cost} shows the evolution of the costs for $12$ test cases, i.e., $N\in\{100,50,20\}$ and $r/n \in \{1,0.9,0.6,0.3\}$.
Table~\ref{tab:ex3_rel_err} shows that all costs are smaller than \num{1e-5} and that the relative errors after the calibration are smaller than 1\,\%,  except for the 
3 cases where $N=r$. 
Figure \ref{fig:ex3_output} confirms that the outputs only match well for the real DAE cases, i.e., $r<n$.
To conclude, we will now consider the cross-validation test. 
The results are shown in Table~\ref{tab:ex3-cross} and Figure~\ref{fig:ex3_cross}. 
As can be seen here, our pH-DAE system cannot be represented by an ODE system. However, the outputs of the other systems are close to those of the original system.


\begin{figure}[H]
    \centering
    \includegraphics[width=0.62\textwidth]{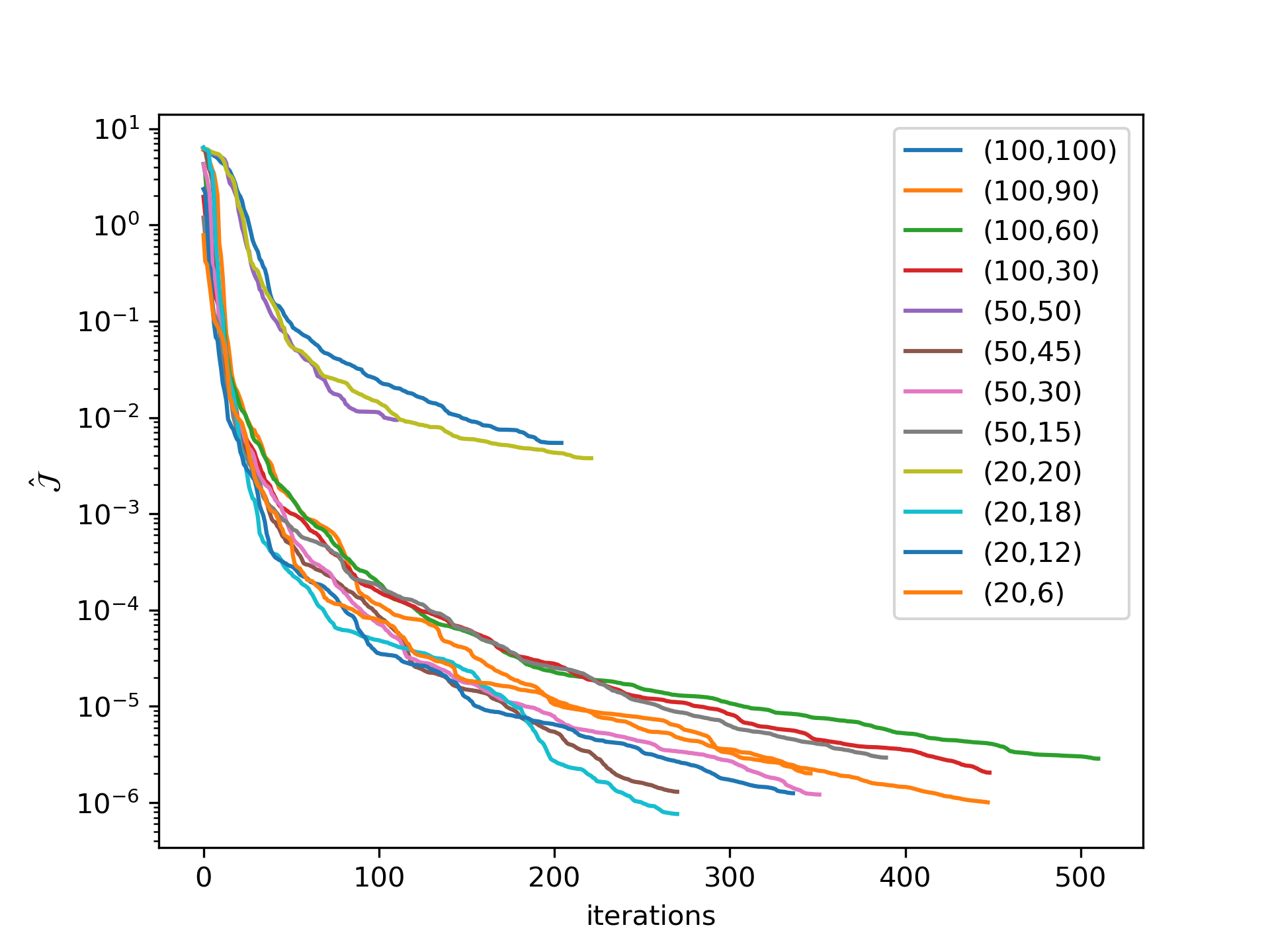}
    \caption{Evolution of the cost for various $r$ and $n$ 
    for $N_{\rm ref} =100$ and $r_{\rm ref}=63$.}
    \label{fig:ex3_cost}
\end{figure}

\begin{table}[H]
    \centering
    \begin{tabular}{c|c|c|c|c} \small
        $ n $ & $r$ & cost in last iter. &$ \|y-y_{\rm ref}\|_\infty/ \|y_{\rm ref}\|_\infty$ before & $ \|y-y_{\rm ref}\|_\infty/ \|y_{\rm ref}\|_\infty$ after\\ \hline
            100 & 100 & \num{5.45e-03}& \num{115}\% & \num{12.18}\% \\
            100 & 90 & \num{1.01e-06}& \num{106}\% & \num{0.10}\% \\
            100 & 60 & \num{2.86e-06}& \num{135}\% & \num{0.16}\% \\
            100 & 30 & \num{2.05e-06}& \num{124}\% & \num{0.08}\% \\
            50 & 50 & \num{9.44e-03}& \num{100}\% & \num{21.87}\% \\
            50 & 45 & \num{1.30e-06}& \num{159}\% & \num{0.11}\% \\
            50 & 30 & \num{1.21e-06}& \num{118}\% & \num{0.08}\% \\
            50 & 15 & \num{2.93e-06}& \num{90}\% & \num{0.08}\% \\
            20 & 20 & \num{3.78e-03}& \num{128}\% & \num{13.95}\% \\
            20 & 18 & \num{7.61e-07}& \num{135}\% & \num{0.08}\% \\
            20 & 12 & \num{1.25e-06}& \num{105}\% & \num{0.11}\% \\
            20 & 6 & \num{2.01e-06}& \num{120}\% & \num{0.09}\% \\
    \end{tabular}
    \caption{Relative errors of the output measured before and after the calibration.}
    \label{tab:ex3_rel_err}
\end{table}

\begin{figure}[H]
    \centering
    \includegraphics[width=0.42\textwidth]{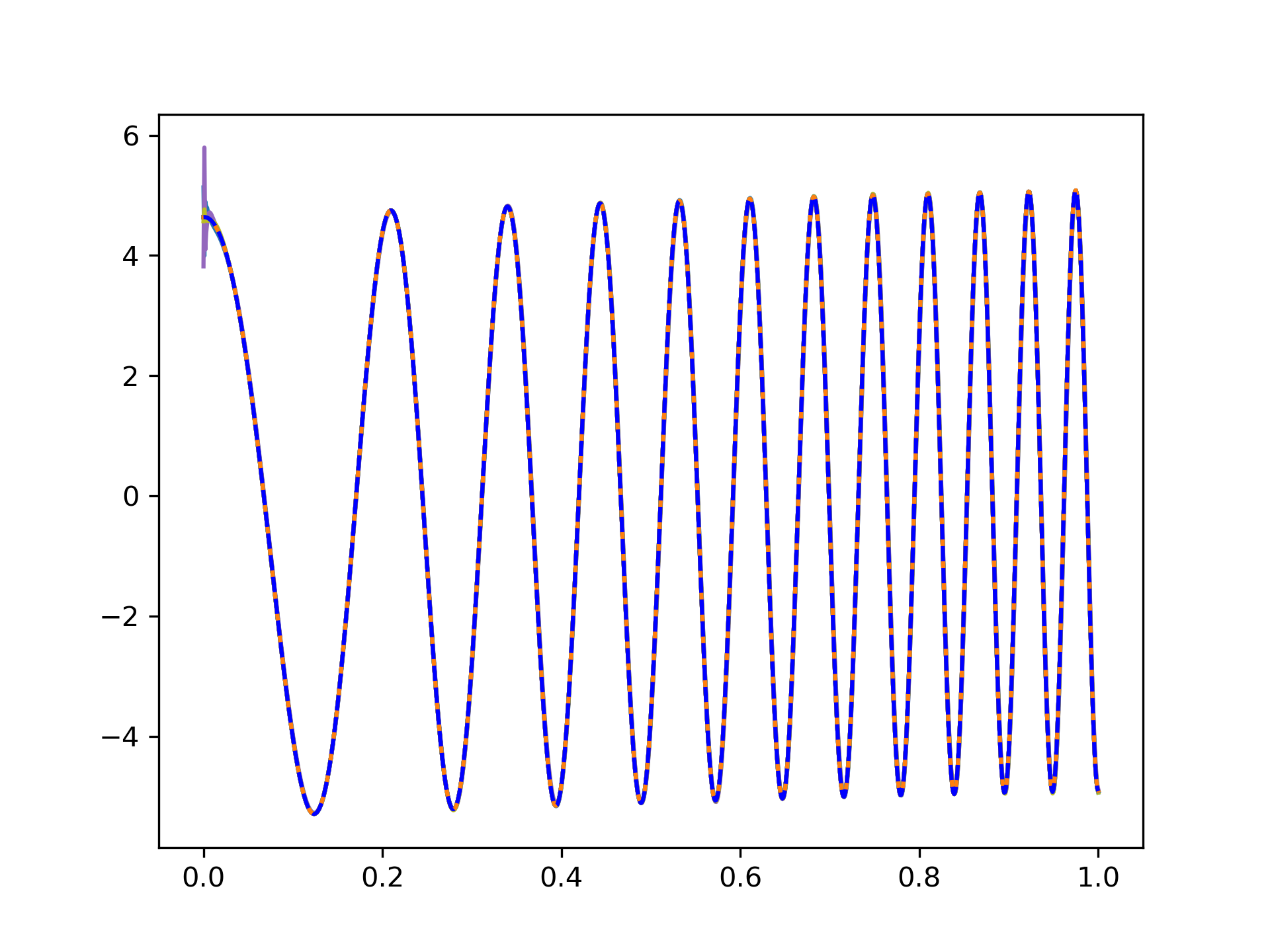}
    \includegraphics[width=0.42\textwidth]{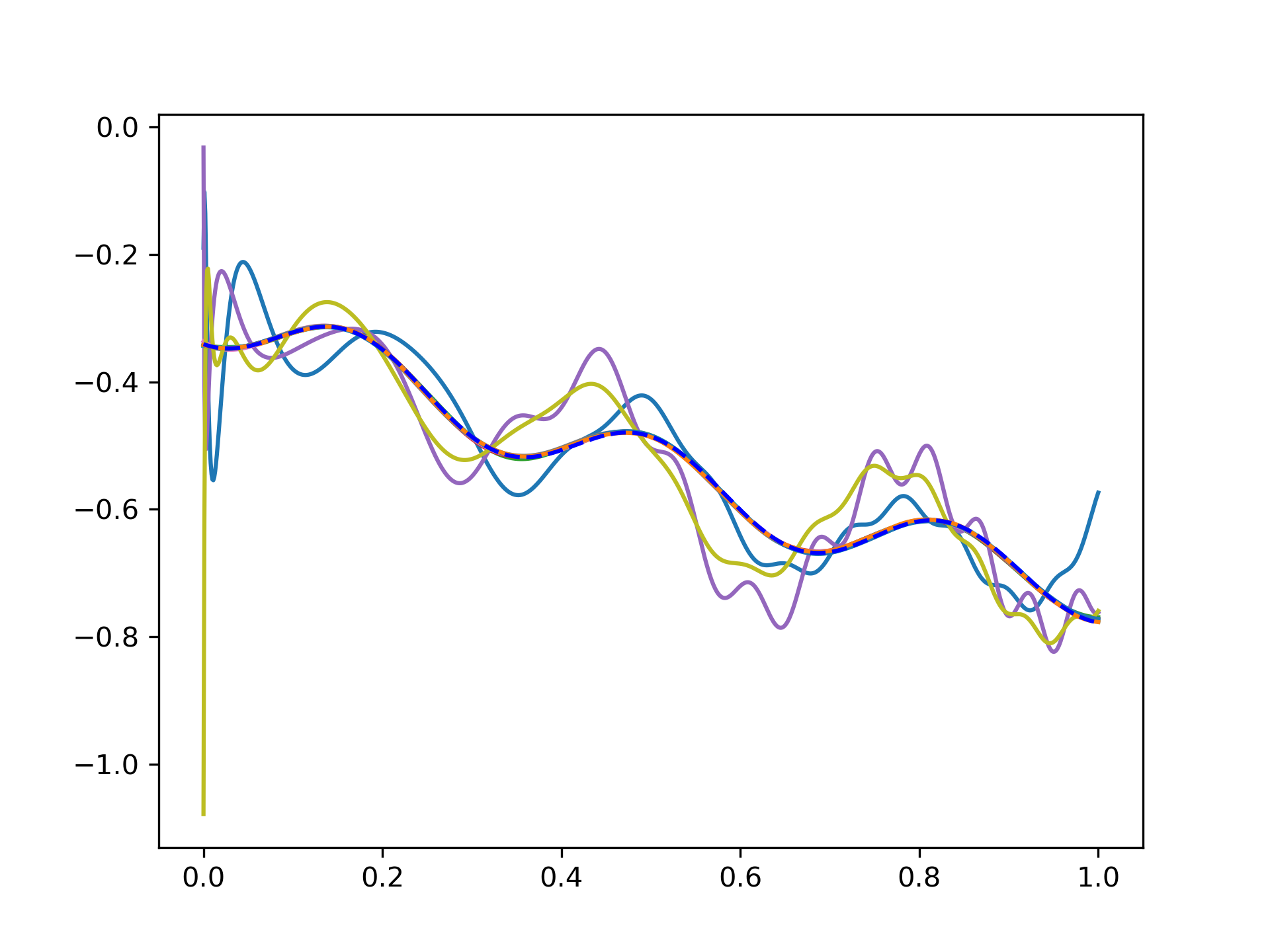}
    \caption{\small Left: first component of the fitted output for different 
    $n$, $r$, and $y_{\rm ref}$
    (blue dashed line). 
    Right: second component of the fitted output for different $n$, $r$, and $y_{\rm ref}$ (blue dashed line).}
    \label{fig:ex3_output}
\end{figure}

\begin{figure}[H]
    \centering
    \includegraphics[width=0.42\textwidth]{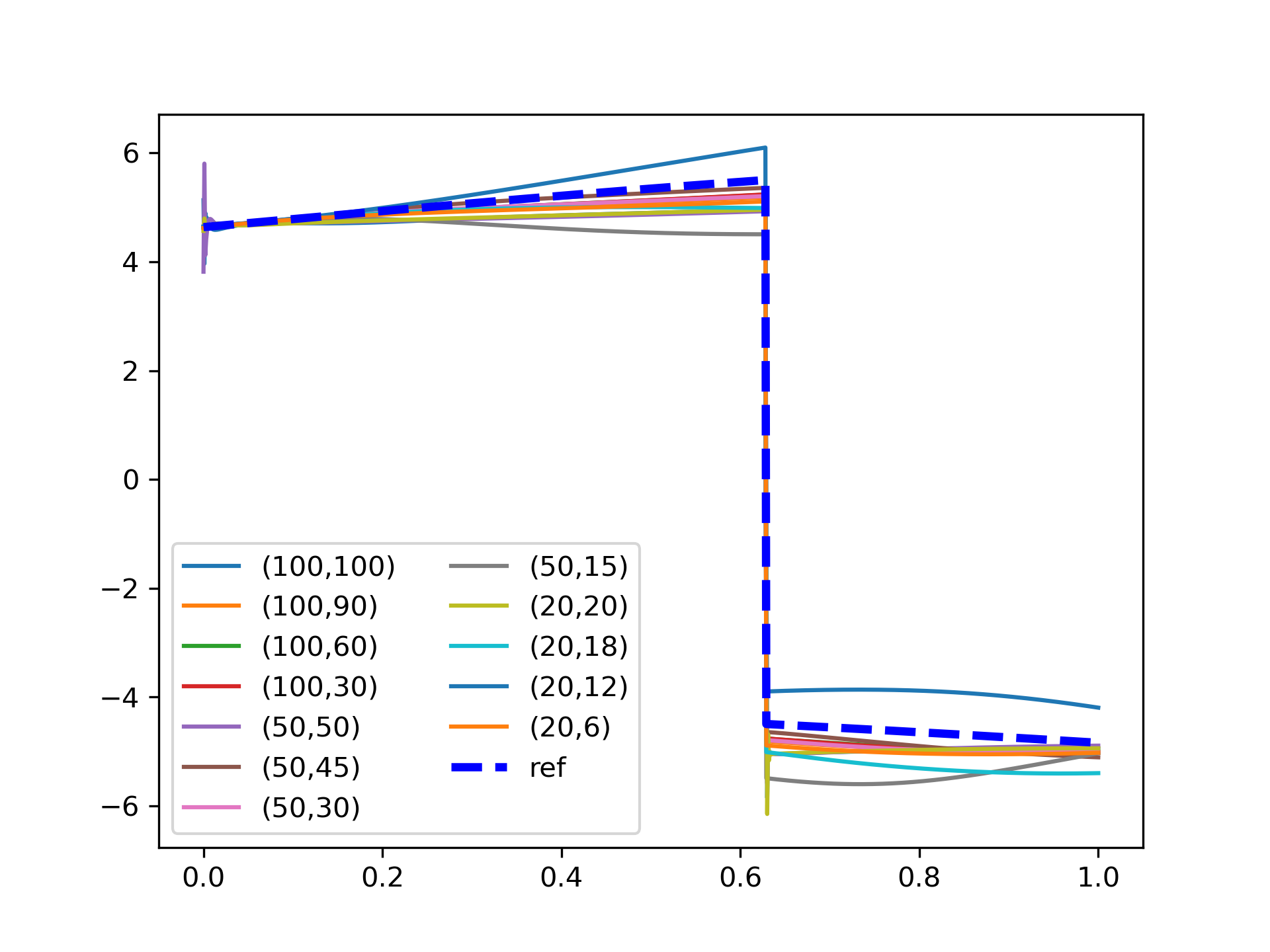}
    \includegraphics[width=0.42\textwidth]{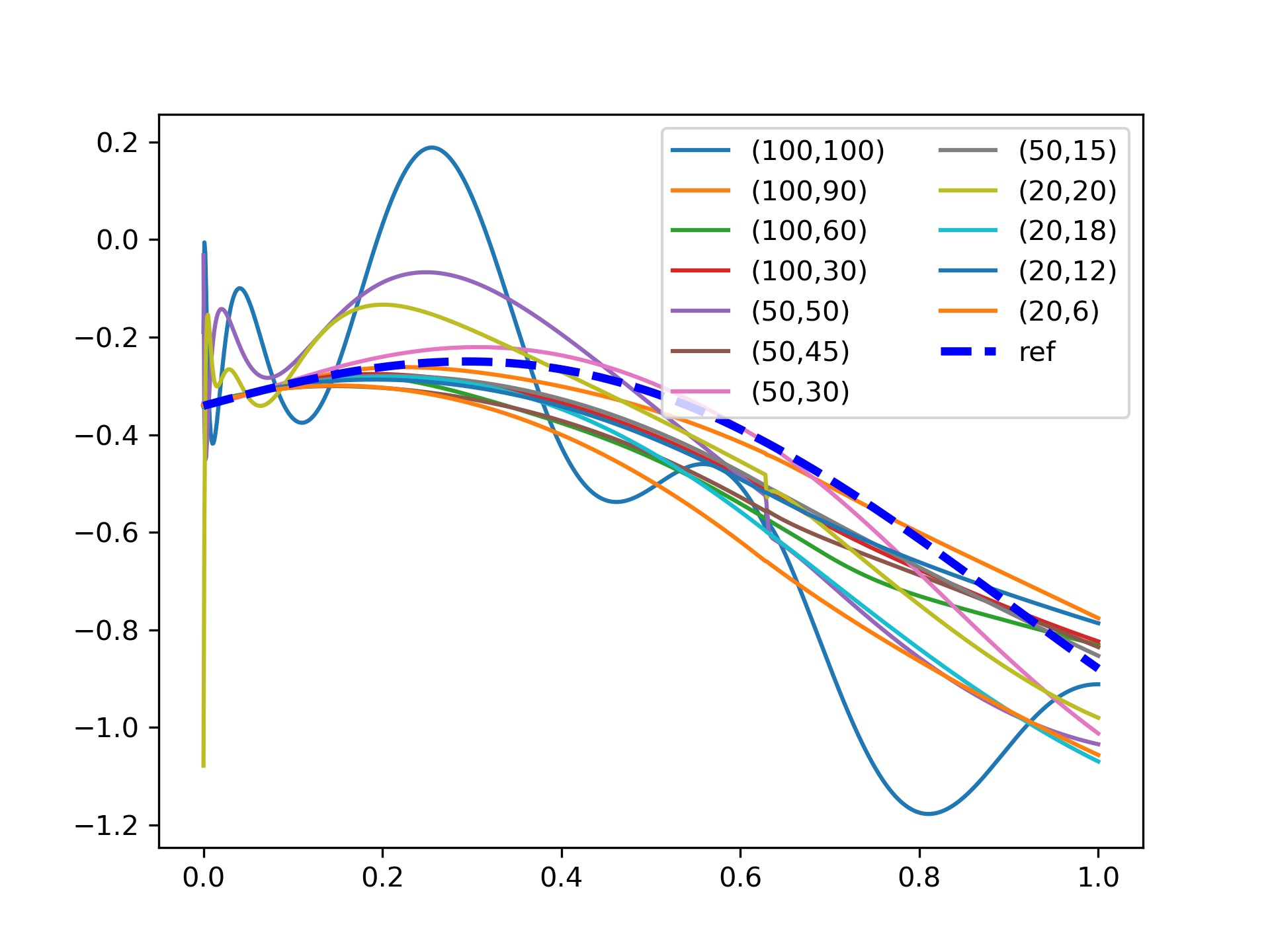}
    \caption{\small Left: first component of the fitted output for different $n$, $r$, and $y_{\rm ref}$ (blue dashed line) 
    Right: second component of the fitted output for different $n$, $r$, and $y_{\rm ref}$ (blue dashed line).} 
    \label{fig:ex3_cross}
\end{figure}

\begin{table}[H]
    \centering
    \begin{tabular}{c|c|c|c}
        $ n $ & $r$ &$ \|y-y_{\rm test}\|_{L^2((0,1),\R^2)}$ & $ \|y-y_{\rm test}\|_\infty $  \\ \hline
        100 & 100 & \num{0.46} & \num{3.36} \\
        100 & 90 & \num{0.23} & \num{0.33} \\
        100 & 60 & \num{0.22} & \num{0.29} \\
        100 & 30 & \num{0.21} & \num{0.30} \\
        50 & 50 & \num{0.40} & \num{3.74} \\
        50 & 45 & \num{0.18} & \num{0.28} \\
        50 & 30 & \num{0.22} & \num{0.33} \\
        50 & 15 & \num{0.66} & \num{1.03} \\
        20 & 20 & \num{0.36} & \num{2.81} \\
        20 & 18 & \num{0.45} & \num{0.67} \\
        20 & 12 & \num{0.50} & \num{0.77} \\
        20 & 6 & \num{0.33} & \num{0.41} \\
    \end{tabular}
    \caption{Difference of fitted output and reference output in maximum /  $L^2$ norm in cross-validation test.}
    \label{tab:ex3-cross}
\end{table}
\section{Conclusion}\label{sec:conclusion}

We presented a data-driven approach for identifying linear index-1 port-Hamiltonian differential-algebraic equation (pH-DAE) systems from input-output data. 
By incorporating the pH structure into the parameterization, the identified models retain the relevant structural properties. 
The index-1 structure is exploited to formulate the identification problem as an unconstrained optimization problem, while an adjoint formulation provides the gradient of the reduced cost functional efficiently.

Numerical experiments demonstrate that the proposed approach can construct pH-DAE surrogate models that accurately reproduce the input-output behaviour of the reference systems. 
The cross-validation experiments further indicate that the identified models can generalize to independent input signals.
The experiments also show the potential for constructing lower-dimensional structure-preserving surrogate models.

Future work will focus on extending the approach to higher-index and nonlinear pH-DAE systems. 
Further directions include the treatment of noisy and incomplete measurements, uncertainty quantification, and the systematic combination of identification with structure-preserving model reduction for large-scale multiphysics applications.


\section*{Acknowledgements} 
\noindent
M.~Ehrhardt, M. Günther and C. Totzeck acknowledge funding by the Deutsche Forschungsgemeinschaft (DFG, German Research Foundation) -- Project-ID 531152215 -- CRC 1701.

\section*{Author contributions} 
\begin{description} 
    \item[TZ] Investigation, Software, Validation, Visualization, Writing – original draft
    \item[ME] Investigation, Project administration, Supervision, Writing – review and editing
    \item[MG]  Conceptualization, Methodology, Writing – review and editing
   \item[CT] Methodology, Supervision, Writing – review and editing
\end{description}




\end{document}